\newtheorem{theorem}{Theorem} [section]
\newtheorem{corollary}[theorem]{Corollary}
\newtheorem{definition}[theorem]{Definition}
\newtheorem{proposition}[theorem]{Proposition}
\newtheorem{lemma}[theorem]{Lemma}
\newtheorem{remark}[theorem]{Remark}
\title[Geometric aspects on Humbert-Edge's curves of type 5]{Geometric aspects on Humbert-Edge's curves of type 5, Kummer surfaces and hyperelliptic curves of genus 2}
\author{Abel Castorena and Juan Bosco Fr\'ias-Medina}
\date{}
\address{Centro de Ciencias Matem\'aticas. Universidad Nacional Aut\'onoma de M\'exico, Campus Morelia. Antigua Carretera a P\'atzcuaro 8701, Col. Ex-Hacienda San Jos\'e de la Huerta, C.P. 58089. Morelia, Michoac\'an, Mexico}
\email{abel@matmor.unam.mx}
\email{bosco@matmor.unam.mx}
\thanks{The first author is supported by Grants PAPIIT UNAM IN100419 ``Aspectos Geom\'etricos del moduli de curvas $M_g$" and CONACyT, M\'exico A1-S-9029 ``Moduli de curvas y curvatura en $A_g$". The second author is supported by Programa de Becas Posdoctorales 2019, DGAPA, UNAM}
\begin{document}

\begin{abstract}
In this work we study the Humbert-Edge's curves of type 5, defined as a complete intersection of four diagonal quadrics in $\mathbb{P}^5$. We characterize them using Kummer surfaces and using the geometry of these surfaces we construct some vanishing thetanulls on such curves. In addition, we describe an argument to give an isomorphism between the moduli space of Humbert-Edge's curves of type 5 and the moduli space of hyperelliptic curves of genus 2, and we let see how this argument can be generalized to state an isomorphism between the moduli space of hyperelliptic curves of genus $g=\frac{n-1}{2}$ and the moduli space of Humbert-Edge's curves of type $n\geq 5$ where $n$ is an odd number.
\end{abstract}

\maketitle

\textbf{Keywords:} Intersection of quadrics; Curves with automorphisms, Jacobian variety.

\section{Introduction}
W.L. Edge began the study of \emph{Humbert's curves} in \cite{Edg51}, such curves are defined as canonical curves in $\mathbb{P}^4$ that are complete intersection of three diagonal quadrics. A natural generalization of Humbert's curve was later introduced by Edge in \cite{Edg77}: irreducible, non-degenerated and non-singular curves on $\mathbb{P}^n$ that are the complete intersection of $n-1$ diagonal quadrics. One important feature of Humbert-Edge's curves of type $n$ noted by Edge in \cite{Edg77} is that each one can be write down in a \emph{normal form}. Indeed, we can assume that the $n-1$ quadrics in $\mathbb{P}^n$ are given by 
\begin{equation*}
    Q_i=\sum_{j=0}^n a_j^{i} x_j^2, \;\; i=0,\dots,n-1
\end{equation*}
where $a_j\in\mathbb{C}$ for all $j\in\{0,\dots,n\}$ and $a_j\neq a_k$ if $j\neq k$. We say that a curve satisfying this conditions is a \emph{Humbert-Edge's curve of type $n$}. Note that in the case of a Humbert's curve $X$, i.e. when $n=4$, this form of the equations implies directly that $X$ is contained in a degree four Del Pezzo surface.  

The Humbert-Edge's curve of type $n$ for $n>4$ have been studied in just a few works. Carocca, Gonz\'alez-Aguilera, Hidalgo and Rodr\'iguez studied in \cite{CGHR08} the Humbert-Edge's curves from the point of view of uniformization and Klenian groups. Using a suitable form of the quadrics, Hidalgo presented in \cite{Hid09} and \cite{Hid12} a family of Humbert-Edge's curves of type 5 whose fields of moduli are contained in $\mathbb{R}$ but none of their fields of definition are contained in $\mathbb{R}$.
Fr\'ias-Medina and Zamora presented in \cite{FZ18} a characterization of Humbert-Edge's curves using certain abelian groups of order $2^n$ and presented specializations admitting larger automorphism subgroups. Carvacho, Hidalgo and Quispe determined in \cite{CHQ16} the decomposition of the Jacobian of generalized Fermat curves, and as a consequence for Humbert-Edge's curves.  Auffarth, Lucchini Arteche and Rojas described in \cite{ALR21} the decomposition of the Jacobian of a Humbert-Edge's curve more precisely given the exact number of factors in the decomposition and their corresponding dimensions.

In the case $n=5$, the normal form for these curves implies that they are contained in a special $K3$ surface, a \emph{Kummer surface}. In this work,  we study the Humbert-Edge's curve of type 5 and determine some properties using the geometry of Kummer surfaces.

Recall that an \emph{algebraic (complex) $K3$ surface} is a complete non-singular projective (compact connected complex) surface $S$ such that $\omega_S\cong \mathcal{O}_S$ and $H^1(S,\mathcal{O}_S)=0$. Classically, a \emph{(singular) Kummer surface} is a surface in $\mathbb{P}^3$ of degree 4 with 16 nodes and no other singularities. An important fact about Kummer surfaces is that are determined by the set of their nodes. One can take the resolution of singularities of a Kummer surface, the obtained non-singular surface is a $K3$ surface. For our purposes, these non-singular models will be called Kummer surfaces.

The Kummer surfaces that we will take for our study are those coming from a hyperelliptic curve of genus 2. In such case, an important feature shared by Humbert-Edge's curves of type 5 and Kummer surfaces is that they admit the automorphism subgroup generated by the natural involutions of $\mathbb{P}^5$ that change the $i$-th coordinate by its negative:
\begin{eqnarray*}
 \sigma_i :\hspace{3mm}  \mathbb{P}^5 \hspace{12mm} & \rightarrow & \hspace{14mm} \mathbb{P}^5 \\
 (x_0:\cdots:x_i\cdots:x_5) &\mapsto & (x_0:\cdots:-x_i:\cdots:x_5)
\end{eqnarray*}
These automorphisms along with Knutsen's result \cite{Knu02} on the existence of a $K3$ surface of degree $2n$ in $\mathbb{P}^{n+1}$ containing a smooth curve of genus $g$ and degree $d$ will enable us to characterize the Humbert-Edge's curves of type 5 using the geometry of a Kummer surface. 

This work is organized as follows. In Section \ref{kummer} we review the construction of a Kummer surface from a two-dimensional torus and conditions that ensure when a Kummer surface is projective. Later, we focus on the case of Kummer surface obtained from hyperelliptic curves of genus 2. Section \ref{hetype5} is splited into three parts. First, in Section \ref{hekummer}, we review the basic properties of Humbert-Edge's curves of type 5 and characterize them using the geometry of the Kummer surface. Later, in Section \ref{theta}, we present the construction of some odd theta characteristic on a Humbert-Edge's curve of type 5 using the automorphisms $\sigma_i$'s and some vanishing thetanulls using the Rosenhain tetrahedra associated to the Kummer surface. Finally, in Section \ref{moduli} we use the embedding given in \cite{CGHR08} to construct an isomorphism between the moduli space $\mathcal{HE}_5$ of Humbert-Edge's curve of type 5 and the moduli space $\mathcal{H}_2$ of hyperelliptic curve of genus 2 and as a consequence, we obtain that $\mathcal{H}_2$ is a three-dimensional closed subavariety of $\mathcal{M}_{17}$. Moreover, we generalized this argument to show that there is an isomorphism between the moduli space $\mathcal{HE}_n$ of Humbert-Edge's curves of type $n$, where $n\geq 5$ is an odd number, and the moduli space $\mathcal{H}_{g}$ of hyperelliptic curves of genus $g=\frac{n-1}{2}$.

\section{Kummer surfaces}\label{kummer}

\subsection{Construction from a two-dimensional torus}
In this paper the ground field is the complex numbers. In this section, we recall the construction of the Kummer surface associated with a two-dimensional torus.

Let $T$ be a two-dimensional torus. Consider the involution $\iota:T\rightarrow T$ which sends $a\mapsto -a$ and take the quotient surface $T/\langle \iota \rangle$. The surface $T/\langle \iota \rangle$ is known as the \emph{singular Kummer surface} of $T$. It is well-known that this surface has 16 ordinary singularities and resolving them we obtain a $K3$ surface called the \emph{Kummer surface} of $T$ and denoted by $\mathrm{Km}(T)$ (see for example \cite[Theorem 3.4]{Gon94}). This procedure is called the \emph{Kummer process}. Note that by construction, $\mathrm{Km}(T)$ has 16 disjoint smooth rational curves, indeed, they correspond to the singular points of the quotient surface. Conversely, Nikulin proved in \cite{Nik75} the converse:  

\begin{theorem}
If a $K3$ surface $S$ contains 16 disjoint smooth rational curves, then there exists a unique complex torus, up to isomorphism, such that $S$ and the rational curves are obtained by the Kummer process. In particular, $S$ is a Kummer surface. 
\end{theorem}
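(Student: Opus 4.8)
The plan is to reverse the Kummer process: starting from $S$ together with the sixteen disjoint smooth rational curves $E_1,\dots,E_{16}$, I would construct a double cover which, after contracting suitable curves, yields the desired torus $T$. First, by the adjunction formula on the $K3$ surface $S$ (where $K_S=0$), each $E_i\cong\mathbb{P}^1$ satisfies $E_i^2=-2$. The crucial preliminary step is to show that the class $E_1+\cdots+E_{16}$ is divisible by two in $\mathrm{Pic}(S)$, i.e.\ that there exists $\delta\in\mathrm{Pic}(S)$ with $2\delta\equiv E_1+\cdots+E_{16}$. This is the arithmetic heart of the argument: it amounts to showing that the sixteen disjoint $(-2)$-classes form an \emph{even set}, which one establishes by analysing the primitive closure of the sublattice $\langle E_1,\dots,E_{16}\rangle\cong\langle-2\rangle^{16}$ inside the unimodular lattice $H^2(S,\mathbb{Z})$ together with its discriminant form. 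I expect this to be the main obstacle, since it is precisely the lattice-theoretic input that distinguishes genuine Kummer configurations from arbitrary collections of disjoint rational curves.

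Granting the even set property, I would use $\delta$ to build the double cover $\pi\colon\widetilde{S}\to S$ branched exactly along $B=E_1+\cdots+E_{16}$, characterised by $\mathcal{O}_S(2\delta)\cong\mathcal{O}_S(B)$. Writing $\widetilde{E}_i$ for the reduced ramification curve lying over $E_i$, so that $\pi^{*}E_i=2\widetilde{E}_i$, the projection formula gives $2\widetilde{E}_i^{\,2}=\pi^{*}E_i\cdot\widetilde{E}_i=E_i\cdot\pi_{*}\widetilde{E}_i=E_i^2=-2$, hence $\widetilde{E}_i^{\,2}=-1$. Since each $\widetilde{E}_i$ is rational, Castelnuovo's criterion lets me contract all sixteen of them simultaneously via a morphism $f\colon\widetilde{S}\to T$ onto a smooth projective surface $T$.

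It remains to identify $T$ as a torus and to recover the Kummer process. For the canonical class, the double cover formula $K_{\widetilde{S}}=\pi^{*}(K_S+\delta)=\pi^{*}\delta$ together with $\pi^{*}\delta=\sum\widetilde{E}_i$ and the blow-down relation $K_{\widetilde{S}}=f^{*}K_T+\sum\widetilde{E}_i$ forces $f^{*}K_T=0$, so $K_T=0$. For the holomorphic Euler characteristic, the formula $\chi(\mathcal{O}_{\widetilde{S}})=2\chi(\mathcal{O}_S)+\tfrac12\delta(\delta+K_S)$ with $\delta^2=\tfrac14(\sum E_i)^2=-8$ gives $\chi(\mathcal{O}_{\widetilde{S}})=4-4=0$, and this is preserved under the blow-down, so $\chi(\mathcal{O}_T)=0$. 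By the Enriques--Kodaira classification, a smooth surface with $K_T=0$ and $\chi(\mathcal{O}_T)=0$ is a two-dimensional complex torus. Finally, the deck involution of $\pi$ descends through $f$ to an involution of $T$ whose sixteen fixed points are the images of the contracted curves; identifying it (after a translation) with $a\mapsto-a$ shows that $S$ and the $E_i$ arise from $T$ by the Kummer process. Uniqueness then follows because $T$ has been reconstructed canonically from the pair $(S,\{E_i\})$: any torus producing the same configuration yields the same double cover and the same contraction, hence an isomorphic $T$.
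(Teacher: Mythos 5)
The paper does not actually prove this statement: it is quoted as Nikulin's theorem with a citation to \cite{Nik75}, and your outline is essentially a faithful sketch of Nikulin's original strategy (even set, branched double cover, contraction of sixteen $(-1)$-curves, identification of the quotient). Judged as a proof, however, it has a genuine gap at exactly the point you flag and then set aside: the divisibility of $E_1+\cdots+E_{16}$ by $2$ in $\operatorname{Pic}(S)$ is not a preliminary technicality but the entire content of the theorem. Everything after ``granting the even set property'' is standard covering-space and surface theory; the lattice-theoretic argument --- that a half-sum $\tfrac12\sum_{i\in I}E_i$ of disjoint $(-2)$-classes can be integral in the even unimodular lattice $H^2(S,\mathbb{Z})$ only for $|I|\in\{0,8,16\}$, and that for a configuration of sixteen disjoint curves the associated binary code is forced (by the rank-$6$ orthogonal complement bounding the discriminant group) to contain the all-ones word, so that $\tfrac12\sum_{i=1}^{16}E_i$ really lies in $\operatorname{Pic}(S)$ --- is precisely what Nikulin proves and what you assume. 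A write-up that grants this step has not proved the theorem.

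There is also an error in the classification step. A smooth compact complex surface with $K_T=0$ and $\chi(\mathcal{O}_T)=0$ need not be a torus: primary Kodaira surfaces have trivial canonical bundle, $\chi(\mathcal{O})=0$, and even topological Euler number $0$, so none of your invariants excludes them. To conclude you must know $T$ is K\"ahler (equivalently $b_1(T)$ is even), e.g.\ by realizing $\widetilde S$ as a submanifold of the total space of the line bundle $\delta$ over $S$, so that $\widetilde S$ is K\"ahler whenever $S$ is --- itself a nontrivial fact for general K3 surfaces, though automatic in the projective setting this paper needs --- and using that $b_1$ is a blow-down invariant. Relatedly, your assertion that $T$ is \emph{projective} is unjustified and false in general: the theorem concerns arbitrary complex tori, and $S$ itself need not be projective. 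Two smaller points: the uniqueness argument tacitly uses that $\operatorname{Pic}(S)$ of a K3 surface is torsion-free, which is what makes the square root $\delta$ of $\mathcal{O}_S\bigl(\sum E_i\bigr)$, hence the double cover, unique; and the identification of the descended involution with $a\mapsto-a$ needs the observation that it has exactly sixteen isolated fixed points, forcing its differential at a fixed origin to be $-\mathrm{id}$.
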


Note that the above construction holds true for any two-dimensional torus, not a necessary projective one. In particular, with this process it is possible construct $K3$ surfaces that are not projective. However, there is an equivalence between the projectivity of the torus and the associated $K3$ surface (see \cite[Theorem 4.5.4]{BL04}):

\begin{theorem}
 Let $T$ be a two-dimensional torus. $T$ is an abelian surface if an only if $\mathrm{Km}(T)$ is projective. 
\end{theorem}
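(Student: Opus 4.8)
The plan is to reduce the statement to the standard fact that a two-dimensional complex torus $T = V/\Lambda$ is an abelian surface precisely when it is projective, which in turn is equivalent to the existence of a line bundle whose associated Hermitian form on $V$ is definite, i.e.\ of a class $\beta \in \mathrm{NS}(T)$ with $\beta^2 > 0$. The bridge between $T$ and $\mathrm{Km}(T)$ is the Kummer process itself: let $b \colon \hat T \to T$ be the blow-up of $T$ at its $16$ two-torsion points, with exceptional curves $E_1, \dots, E_{16}$, and let $\hat\iota$ be the lift of $\iota$. Since $\iota$ acts as $-1$ on each tangent space at a fixed point, $\hat\iota$ fixes each $E_i$ pointwise and acts by $-1$ in the normal direction, so the quotient is smooth and $\mathrm{Km}(T) \cong \hat T / \langle \hat\iota \rangle$; write $\pi \colon \hat T \to \mathrm{Km}(T)$ for the degree-two quotient map.

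For the forward implication, suppose $T$ is abelian. Then $T$ is projective, hence so is the blow-up $\hat T$, since blowing up a projective surface at finitely many points preserves projectivity. As $\mathrm{Km}(T)$ is the quotient of the projective variety $\hat T$ by the finite group $\langle\hat\iota\rangle$, it is again projective: starting from an ample line bundle $L$ on $\hat T$, the norm $\bigotimes_{g} g^* L$ is $\langle\hat\iota\rangle$-invariant, and a suitable power is linearized and descends to an ample line bundle on the quotient. Thus $\mathrm{Km}(T)$ is projective.

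For the converse, suppose $\mathrm{Km}(T)$ carries an ample line bundle with class $\alpha$. Since $\pi$ is finite, $\pi^*\alpha$ is ample on $\hat T$, so $(\pi^*\alpha)^2 = 2\alpha^2 > 0$. Using the orthogonal decomposition $H^2(\hat T, \mathbb Z) = b^* H^2(T,\mathbb Z) \oplus \bigoplus_{i=1}^{16}\mathbb Z [E_i]$, which respects the Hodge structures, I would write $\pi^*\alpha = b^*\beta + \sum_i m_i [E_i]$ with $\beta \in \mathrm{NS}(T)$ and $m_i \in \mathbb Z$; then $\beta^2 = (\pi^*\alpha)^2 + \sum_i m_i^2 = 2\alpha^2 + \sum_i m_i^2 > 0$. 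It remains to convert this positivity into a polarization: if $M$ is a line bundle on $T$ with $c_1(M) = \beta$ and $H$ is its Hermitian form, then $\int_T c_1(M)^2$ is a positive multiple of $\det H$, so $\beta^2 > 0$ forces $H$ to be definite; replacing $M$ by $M^{-1}$ if necessary we obtain a positive definite form, i.e.\ a polarization of $T$, so $T$ is abelian.

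I expect the main obstacle to lie in the converse, specifically in the last step: checking that the class $\beta$ extracted on $T$ is genuinely integral and of type $(1,1)$ (which rests on the compatibility of the blow-up decomposition with the Hodge decomposition), and above all in the sign analysis showing that $c_1(M)^2 > 0$ forces the Hermitian form to be definite rather than of signature $(1,1)$. Establishing the precise relation between $\int_T c_1(M)^2$ and $\det H$ via Riemann--Roch on the torus ($\chi(M) = \tfrac12 c_1(M)^2$) and the classification of line bundles on complex tori is the crux; the forward direction is comparatively routine once projectivity is recognized as being preserved by blow-ups and by finite quotients.
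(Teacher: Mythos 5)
The paper does not actually prove this statement: it is quoted as a known result with a citation to \cite[Theorem 4.5.4]{BL04}, so there is no internal proof to compare yours against. Your argument is correct, and it is essentially the standard proof of the cited fact. The forward direction (projectivity is preserved by blowing up points and by finite quotients) is routine, as you say. In the converse, the two points you flag as the crux are indeed the only delicate ones, and both close by standard facts. First, $\beta$ lies in $\mathrm{NS}(T)$: integrality is automatic because the decomposition $H^2(\hat T,\mathbb Z)=b^*H^2(T,\mathbb Z)\oplus\bigoplus_{i}\mathbb Z[E_i]$ holds over $\mathbb Z$, and $\beta$ is of type $(1,1)$ because $b^*\beta=\pi^*\alpha-\sum_i m_i[E_i]$ is a difference of $(1,1)$ classes and $b^*$ is an injective morphism of Hodge structures; the class $\beta$ is then realized by a line bundle $M$ via the exponential sequence (the Lefschetz $(1,1)$ argument works on any complex torus, algebraic or not). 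Second, the sign analysis: with the orientation induced by the complex structure one has $\int_T c_1(M)^2=2\,\mathrm{Pf}(\mathrm{Im}\,H)=2\det\nolimits_{\mathbb C}H$, and a Hermitian form on $\mathbb C^2$ has positive determinant exactly when it is definite (both eigenvalues of the same sign), so $\beta^2>0$ rules out both degeneracy and signature $(1,1)$; replacing $M$ by $M^{-1}$ if necessary gives a polarization. Equivalently, you could invoke Mumford's index theorem on complex tori: $\chi(M)=\tfrac12\beta^2>0$ forces the index of $H$ to be $0$ or $2$, i.e.\ $H$ definite. Either way your outline is complete modulo these standard facts, and your intersection-number bookkeeping ($(\pi^*\alpha)^2=2\alpha^2$ and $\beta^2=(\pi^*\alpha)^2+\sum_i m_i^2$) is exact; the only practical remark is that for the converse you never need ampleness of $\pi^*\alpha$, only $(\pi^*\alpha)^2>0$, which follows from $\deg\pi=2$ alone.
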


Now, if $A$ is a principally polarized abelian surface, then $A$ is one of the following (see \cite[Corollary 11.8.2]{BL04}):
\begin{enumerate}
 \item[a)] The Jacobian of a smooth hyperelliptic curve of genus 2, or
 \item[b)] The canonical polarized product of two elliptic curves.
\end{enumerate}
As we will see next, the Case a) is the one of our interest.

\subsection{Hyperelliptic curves of genus 2 and Kummer surfaces}\label{hyperkummer}
We are interested in $K3$ surfaces that are a smooth complete intersection of type $(2,2,2)$ in $\mathbb{P}^5$, i.e. that are a complete intersection of three quadrics. Moreover, we restrict to the case in which the quadrics are diagonal. The interest of having diagonal quadrics defining the $K3$ surface is that they enable us to work with hyperelliptic curves of genus 2. 

Indeed, let $C$ be the hyperelliptic curve of genus 2 given by the affine equation
\begin{equation}\label{hypcurve}
  y^2=f(x)=(x-a_0)(x-a_1)\cdots(x-a_5),
\end{equation}
where $a_0,\dots,a_5\in\mathbb{C}$ and $a_i\neq a_j$ if $i\neq j$. We can consider the jacobian surface $J(C)$ associated with $C$ and applying the Kummer process we obtain that the $K3$ surface $\mathrm{Km}(J(C))$ is isomorphic to the surface in $\mathbb{P}^5$ defined by the complete intersection of the 3 diagonal quadrics by \cite[Theorem 2.5]{Shio77}.
\begin{equation}\label{normalkummer}
 Q_i=\sum_{j=0}^5 a_j^i x_j^2, \;i=0,1,2.
\end{equation}

In order to obtain a hyperelliptic curve of genus 2 beginning with a smooth $K3$ surface $S$ in $\mathbb{P}^5$ given by the complete intersection of three diagonal quadrics, we may assume an additional hypothesis. Edge studied in \cite{Edg67} the Kummer surfaces defined by \eqref{normalkummer}. One of his result establishes that whenever a surface $X$ given by the intersection of three linearly independent quadrics has a common self-polar simplex $\Sigma$ in $\mathbb{P}^5$ and contains a line in general position, then the equations defining $X$ can be written with the form \eqref{normalkummer}. Observe that this fact is equivalent to require that $X$ contains 16 disjoint lines, indeed, using the natural involutions of $\mathbb{P}^5$ one can obtain the others lines.

Then, let $S$ be a smooth $K3$ surface in $\mathbb{P}^5$ given by the complete intersection of the quadrics
\begin{equation*}
 Q_i=\sum_{j=0}^5 a_{ij} x_j^2, \;\;\; i=0, 1, 2,
\end{equation*}
where $a_{ij}\in\mathbb{C}$ for $i=0,1,2$ and $j=0,\dots,5$ and assume that $S$ contains 16 disjoint lines. As a consequence, we may assume that $S$ is given by the quadrics in \eqref{normalkummer}
for some $a_i\in\mathbb{C}$ where $a_i\neq a_j$ if $i\neq j$. By \cite[Theorem 1]{Nik75} there exists an unique (up to isomorphism) two-dimensional torus that gives rise the surface $S$. Taking the hyperelliptic curve $C$ given by the equation $y^2=f(x)=(x-a_0)(x-a_1)\cdots(x-a_5)$, we obtain that $S$ is isomorphic to $\mathrm{Km}(J(C))$.

From now on, we say that a Kummer surface is a smooth surface in $\mathbb{P}^5$ given by the complete intersection of three diagonal quadrics as in \eqref{normalkummer}. 

For a Kummer surface $S$ given by \eqref{normalkummer}, it is possible give the parametric form of the 32 lines contained in $S$. Indeed, in \cite{Edg67} Edge noted that the equation of a line $\ell$ contained in $S$ is given in the following parametric form:
\begin{equation}\label{line_kummer}
 \left(\frac{t+a_0}{\sqrt{f'(a_0)}}, \frac{t+a_1}{\sqrt{f'(a_1)}},\frac{t+a_2}{\sqrt{f'(a_2)}},\frac{t+a_3}{\sqrt{f'(a_3)}},
 \frac{t+a_4}{\sqrt{f'(a_4)}},\frac{t+a_5}{\sqrt{f'(a_5)}}\right).
\end{equation}
Recall that the natural automorphisms $\sigma_i: x_i\mapsto -x_i$ of $\mathbb{P}^5$ act on $S$. Denote by $E$ the group $\langle \sigma_0,\dots,\sigma_5 \rangle$. Applying each element of $E$ to the line $\ell$, we obtain the other 32 lines on $S$. The identity gives the line $\ell$ and the remaining elements give the other 31 lines:
\begin{itemize}
 \item $\ell_i:=\sigma_i(\ell)$ for all $i\in\{0,\dots,5\}$,
 \item $\ell_{ij}:=\sigma_i\sigma_j(\ell)$ for different $i,j\in\{0,\dots,5\}$
 \item $\ell_{ijk}:=\sigma_i\sigma_j\sigma_k(\ell)$ for different $i,j,k\in\{0,\dots,5\}$.
\end{itemize}

It is well-know that a singular Kummer surface $K$ is birational to a Weddle surface $W$ (see for example \cite[Proposition 1]{Var86}). A \emph{Weddle surface} is a quartic surface in $\mathbb{P}^3$ with 6 nodes. The 32 lines on $S$ have a geometric interpretation in both $K$ and $W$ as Edge pointed in \cite{Edg67}. Indeed, the projection $\pi$ of $S$ from $\ell$ is a Weddle surface $W$ and it occurs:
\begin{itemize}
 \item $\pi(\ell_i)=k_i$ is a node on $W$, for all $i=0,\dots,5$,
 \item $\pi(\ell_{ij})$ is the line through $k_i$ and $k_j$, for different indices $i,j\in\{0,\dots,5\}$, 
 \item $\pi(\ell_{ijh})$ is the line in the intersection of the plane generated by $k_i,k_j,k_h$ with the complementary plane, for different indices $i,j,h\in\{0,\dots,5\}$, and 
 \item $\pi(\ell)$ is the cubic on $W$ through the six nodes $k_i$'s.
\end{itemize}
On the other hand, since $S$ is the resolution of singularities of $K$ it occurs:
\begin{itemize}
 \item The 16 nodes of $K$ correspond to the 16 lines $\ell_{i}$ and $\ell_{ijk}$, and
 \item The conics of contact of $K$ with its 16 tropes correspond to the 16 lines $\ell$ and $\ell_{ij}$. 
\end{itemize}

A \emph{trope} is a plane which intersects the quartic along a conic. The nodes and the tropes of a singular Kummer surface provide an interesting configuration on it.

\begin{definition}
Let $\Gamma$ be a set of $16$ planes and $16$ points in $\mathbb{P}^3$.
\begin{itemize}
    \item $\Gamma$ is a \emph{$(16,6)$-configuration} if every plane contains exactly $6$ of the $16$ points and every point lies in exactly $6$ of the $16$ planes. The 16 planes are called \emph{special planes}.  
    \item A $(16,6)$-configuration is \emph{non-degenerated} if every two special planes shared exactly two points of the configuration and every pair of points is contained in exactly two special planes.
    \item An \emph{abstract $(16,6)$-configuration} is a $16\times 16$ matrix $(a_{ij})$ whose entries are ones or zeros, with exactly $6$ ones in each row and in each column. The rows of the matrix are called \emph{points} of the configuration and the columns are called \emph{planes} of the configuration. The $i$-th point \emph{belongs} to the $j$-th plane if and only if $a_{ij}=1$.
\end{itemize}
 
\end{definition}

Gonzalez-Dorrego classified in \cite{Gon94} the non-degenerated $(16,6)$-configurations and used them to classify the singular Kummer surfaces. Given a singular Kummer surface, the nodes and the tropes establish a non-degenerated $(16,6)$-configuration (see \cite[Corollary 2.18]{Gon94}) and conversely, given a $(16,6)$-configuration, there exists a singular Kummer surface whose associated $(16,6)$-configuration is the given one (see \cite[Theorem 2.20]{Gon94}). 

\begin{definition}
A \emph{Rosenhain tetrahedron} in an abstract $(16,6)$-configuration is a set of $4$ points and $4$ planes such that each plane contains exactly $3$ points and each point belongs to exactly $3$ planes. The $4$ points are the \emph{vertices} of the tetrahedron. An \emph{edge} is a pair of vertices and a \emph{face} a triple of vertices.
\end{definition}

Rosenhain tetrahedra always exist in a singular Kummer surface, in fact, there exist $80$ of them (\cite[Corollary 3.21]{Gon94}). Moreover, these tetrahedra are relevant because using them we can construct divisors that are linearly equivalent and whose class induces the closed embedding to $\mathbb{P}^5$ (see \cite[Proposition 3.22 and Remark 3.24]{Gon94}):

\begin{proposition}\label{rosenhain}
Given a Rosenhain tetrahedron on a singular Kummer surface $K$, let $D$ be the divisor on the associated Kummer surface $S$ given by the sum of proper transforms of the $4$ conics in which the planes meet on $K$ and the $4$ exceptional divisors corresponding to the $4$ nodes. Then, the linear equivalence class of $D$ is independent of the choice of the Rosenhain tetrahedron. In addition, $D^2=8$, $\mathrm{dim}|D|=5$ and the linear system $|D|$ induces a closed embedding of $S$ in $\mathbb{P}^5$ as the complete intersection of three quadrics.
\end{proposition}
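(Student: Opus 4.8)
The plan is to reduce the whole statement to the single assertion that the Rosenhain divisor $D$ is linearly equivalent to the hyperplane class $H$ of the embedding of $S$ as a complete intersection of three quadrics in $\mathbb{P}^5$ already produced in Section \ref{hyperkummer}. Once this is proved, the independence of the choice of tetrahedron, the value $D^2=8$, the equality $\dim|D|=5$, and the embedding statement all follow simultaneously from the corresponding properties of $H$.

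First I would record the intersection numbers on the $K3$ surface $S$. Writing $D=T_1+T_2+T_3+T_4+E_1+E_2+E_3+E_4$, where the $T_a$ are the proper transforms of the four trope conics and the $E_b$ are the four exceptional curves over the vertices, each summand is a smooth rational curve, so by adjunction $T_a^2=E_b^2=-2$. The exceptional curves are pairwise disjoint, $E_bE_{b'}=0$; two distinct trope conics meet only at the nodes they share, and the Kummer process separates their proper transforms, so $T_aT_{a'}=0$; and $T_aE_b=1$ exactly when the vertex $b$ lies on the plane $a$. The tetrahedral incidence (each face contains exactly three vertices) gives $\sum_{a,b}T_aE_b=12$, whence $D^2=8\cdot(-2)+2\cdot 12=8$.

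Next I would compute $D\cdot H$. By Edge's description recalled above, the four exceptional curves and the four trope transforms are among the $32$ lines of the $\mathbb{P}^5$-model, so each meets $H$ in one point and $D\cdot H=4+4=8$. Since $H$ is very ample, hence ample, and $(D-H)^2=D^2-2D\cdot H+H^2=8-16+8=0$ while $(D-H)\cdot H=0$, the Hodge index theorem forces $D-H$ to be numerically trivial, because the orthogonal complement of $H$ in $\mathrm{NS}(S)_{\mathbb{R}}$ is negative definite. As $S$ is a $K3$ surface, $\mathrm{Pic}(S)=\mathrm{NS}(S)$ is torsion free, so $D\sim H$. This at once yields $D^2=8$, the independence of the class of $D$ from the chosen tetrahedron (every such $D$ is linearly equivalent to the fixed class $H$), and the fact that $|D|=|H|$ realizes the complete-intersection embedding; the value $\dim|D|=5$ then follows from $|H|$, and is confirmed by Riemann--Roch, $\chi(\mathcal{O}_S(D))=2+\tfrac12 D^2=6$, together with $h^2(D)=h^0(-D)=0$ and the vanishing $h^1(D)=0$ valid because $D\sim H$ is ample.

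The main obstacle is the bookkeeping of the intersection numbers, in particular verifying $T_aT_{a'}=0$ from the way the resolution separates the trope conics at their common nodes, and matching the eight curves of $D$ with lines of the $\mathbb{P}^5$-model so that $D\cdot H$ can be read off. If one prefers a proof that does not use the embedding a priori, the real work shifts to showing directly that $|D|$ is base-point free and very ample and that its image is a $(2,2,2)$ complete intersection; this can be carried out with Saint-Donat's criteria for linear systems on $K3$ surfaces and his classification of projective models of genus-$5$ $K3$ surfaces, after ruling out the hyperelliptic and base-point exceptions of those criteria (the existence of an elliptic pencil $E$ with $D\cdot E\le 2$) for the class $D$.
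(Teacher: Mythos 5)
Your proof cannot be compared line-by-line with the paper's, because the paper does not prove this proposition at all: it is imported from Gonzalez-Dorrego \cite[Proposition 3.22 and Remark 3.24]{Gon94}, where the result is established intrinsically for an arbitrary singular Kummer quartic in $\mathbb{P}^3$ via the theory of non-degenerate $(16,6)$-configurations. Your route is genuinely different and, inside this paper's conventions, quite economical: since here a Kummer surface $S$ is by definition a $(2,2,2)$ complete intersection in $\mathbb{P}^5$ and $K$ is its singular model, the hyperplane class $H$ (with $H^2=8$) is available from the start, Edge's dictionary recalled in Section \ref{hyperkummer} identifies the four exceptional curves and the four trope transforms with eight of the thirty-two lines, and your computation $D^2=D\cdot H=H^2=8$ combined with the Hodge index theorem and torsion-freeness of $\mathrm{Pic}$ of a $K3$ gives $D\sim H$, from which all four assertions follow at once. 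This buys a short, self-contained argument proving exactly what the paper needs later; indeed it would streamline Proposition \ref{x5theta}, since $X_5=S\cap Q\in|2H|=|2D|$ without invoking Knutsen's Proposition 3.1. What it does not buy is the intrinsic statement for a singular Kummer surface not already presented in $\mathbb{P}^5$, where the existence of the embedding is part of the conclusion rather than a hypothesis; for that one must fall back on your alternative (Saint-Donat's criteria applied to $|D|$), which is the spirit of \cite{Gon94}. Two points in your bookkeeping deserve tightening. First, $T_aT_{a'}=0$ is asserted rather than proved (``the Kummer process separates them''); a clean justification is either upstairs on the abelian surface, where two translates of the theta divisor meet transversally in exactly two $2$-torsion points, so their proper transforms are already disjoint after one blow-up, or via Edge's projection description: $\pi(\ell_{ij})$ is a line rather than a point, hence $\ell_{ij}\cap\ell=\emptyset$, and applying the group $E$ transports this to all pairs of trope-type lines. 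Second, $T_aE_b=1$ (rather than some higher multiplicity) at an incident pair uses that a smooth curve branch through an $A_1$-point meets the exceptional $(-2)$-curve of the minimal resolution transversally in one point; this is standard but worth a sentence.
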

 
These divisors will be used in next section to construct vanishing thetanulls on Humbert-Edge's curves of type 5.

\section{Humbert-Edge curves of type 5}\label{hetype5}

\subsection{Properties and characterization}\label{hekummer}

Here, we review the main properties of the Humbert-Edge's curve of type 5 and present a characterization using the lines lying on a Kummer surface.

\begin{definition}
 An irreducible, non-degenerated and non-singular curve $X_5\subseteq\mathbb{P}^5$ is a \textit{Humbert-Edge's curve of type 5} if it is the complete intersection of 4 diagonal quadrics $Q_0,\dots,Q_3$:
\begin{equation*}
 Q_i=\sum_{j=0}^5 a_{ij} x_j^2, \;\;\; i=0,\dots, 3. 
\end{equation*}
\end{definition}

The basic properties of a Humbert-Edge's curve of type 5 are stated below.

\begin{lemma}
 Let $X_5\subset\mathbb{P}^5$ be a Humbert-Edge's curve of type 5. The following hold:
\begin{enumerate}
  \item $X_5$ is a curve of degree 16.
  \item The genus of $X_5$ is equal to $g(X_5)=17$.
  \item Every 4-minor of the matrix $(a_{ij})$ is non-degenerated.
  \item $X_5$ is non-trigonal.
\end{enumerate}
\end{lemma}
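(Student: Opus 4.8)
The plan is to dispatch the four items in increasing order of difficulty, treating (1) and (2) as standard complete-intersection computations, (3) via the Jacobian criterion, and (4) by exploiting the group $E$ of sign-change involutions. For (1), since $X_5$ is the complete intersection of the four quadrics $Q_0,\dots,Q_3$, B\'ezout's theorem gives $\deg X_5=2^4=16$. For (2), I would use the adjunction formula for a complete intersection of hypersurfaces of degrees $d_1,\dots,d_{n-1}$ in $\mathbb{P}^n$, which yields $\omega_{X_5}\cong\mathcal{O}_{X_5}\big(\textstyle\sum_i d_i-(n+1)\big)$. Here $n=5$ and every $d_i=2$, so $\omega_{X_5}\cong\mathcal{O}_{X_5}(8-6)=\mathcal{O}_{X_5}(2)$; taking degrees, $2g-2=\deg\omega_{X_5}=2\deg X_5=32$, hence $g(X_5)=17$.

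For (3), I would argue by contraposition using the Jacobian criterion. Write $A=(a_{ij})$ for the $4\times 6$ coefficient matrix; the Jacobian of $(Q_0,\dots,Q_3)$ at a point $p$ is $\big(2a_{ij}x_j(p)\big)$, whose rank equals the rank of the submatrix of $A$ on the columns $j$ with $x_j(p)\neq 0$. Suppose some $4$-minor, say on the columns $S$ with $|S|=4$, vanishes; then the square submatrix $A_S$ is singular and admits a nonzero kernel vector $w$. Extracting complex square roots of the entries of $w$ produces a point $p\in\mathbb{P}^5$ supported on $S$ with $Q_i(p)=(A_Sw)_i=0$ for all $i$, so $p\in X_5$; but the nonzero columns of its Jacobian all lie in $A_S$, so the Jacobian has rank at most $3<4$. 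Thus $p$ would be a singular point, contradicting the smoothness of $X_5$, and every $4$-minor must be nonzero.

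For (4), the plan is to use the faithful action of $E=\langle\sigma_0,\dots,\sigma_5\rangle\cong(\mathbb{Z}/2)^5$ on $X_5$ (each $\sigma_i$ preserves every diagonal $Q_j$, and faithfulness follows from the nondegeneracy of $X_5$, which cannot lie in the fixed locus of a nontrivial sign change). Assume $X_5$ is trigonal. Since $g(X_5)=17\geq 5$, the $g^1_3$ is unique, hence invariant under all of $\mathrm{Aut}(X_5)$, in particular under $E$; the associated degree-$3$ map $\psi:X_5\to\mathbb{P}^1$ therefore descends the $E$-action to a homomorphism $E\to\mathrm{Aut}(\mathbb{P}^1)=\mathrm{PGL}_2(\mathbb{C})$. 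Because the elementary abelian $2$-subgroups of $\mathrm{PGL}_2(\mathbb{C})$ have rank at most $2$, this map cannot be injective, so some nontrivial $\sigma\in E$ acts trivially on the base and hence preserves every fibre of $\psi$. On a general fibre (three distinct points) $\sigma$ is an involution; if it fixed all three points on every general fibre it would be the identity, so it fixes exactly one point per general fibre, producing infinitely many fixed points of a nontrivial automorphism of a curve --- impossible. This contradiction shows $X_5$ is not trigonal.

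I expect the main obstacle to be the justification in (4) that the $g^1_3$ is unique, so that it is genuinely $E$-invariant, together with ruling out a base point in the pencil, i.e. that $X_5$ is not hyperelliptic. Both are classical for $g\geq 5$: uniqueness follows from the fact that two distinct base-point-free $g^1_3$'s would embed the curve birationally as a curve of bidegree $(3,3)$ on $\mathbb{P}^1\times\mathbb{P}^1$, whose arithmetic genus is only $4$, and the hyperelliptic case is excluded by running the same $E\to\mathrm{PGL}_2(\mathbb{C})$ argument on the degree-$2$ pencil (whose quotient would force $E/\ker$ to be an elementary abelian $2$-group of rank $\geq 4$ inside $\mathrm{PGL}_2(\mathbb{C})$).
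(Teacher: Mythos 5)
The paper itself offers no proof of this lemma: it is presented as a list of known basic facts, with the substantive items (3) and (4) left to the cited literature (Edge \cite{Edg77} and Fr\'ias-Medina--Zamora \cite{FZ18}), so there is no in-paper argument to compare yours against; your proposal must stand on its own, and it does. Items (1) and (2) are the standard B\'ezout and adjunction computations and are correct. Your item (3) is a clean contrapositive via the Jacobian criterion: a vanishing $4$-minor on a column set $S$ gives a nonzero kernel vector $w$ of $A_S$, and choosing complex square roots of its entries produces a point of $V(Q_0,\dots,Q_3)$ supported on $S$ at which every column of the Jacobian is either zero or a multiple of a column of the singular matrix $A_S$, so the rank is at most $3$, contradicting smoothness of the complete intersection. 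Your item (4) assembles the right ingredients: faithfulness of $E\cong(\mathbb{Z}/2\mathbb{Z})^5$ on $X_5$ (irreducibility plus non-degeneracy rules out $X_5$ lying in the fixed locus of a sign change), uniqueness of the $g^1_3$ for a trigonal curve of genus $17\geq 5$ (your $\mathbb{P}^1\times\mathbb{P}^1$ bidegree argument is the standard one), the fact that elementary abelian $2$-subgroups of $\mathrm{PGL}_2(\mathbb{C})$ have rank at most $2$, and finiteness of the fixed locus of a nontrivial automorphism; the hyperelliptic case is correctly disposed of by the same mechanism. One small streamlining: once a nontrivial $\sigma\in E$ preserves every fibre of the degree-$3$ map, note simply that an involution of a $3$-element set always fixes at least one point, so $\sigma$ has a fixed point on each of the infinitely many general fibres --- there is no need for the case split about fixing all three points on every fibre.
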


The diagonal form of the equations defining a Humbert-Edge's curve $X_5$ of type 5 implies that it admits the action of the group $E$ generated by the six involutions $\sigma_i: x_i\mapsto -x_i$ acting with fixed points and whose product is the identity. 
Moreover, these involutions establish a relation between the Humbert-Edge's curves of type 5 and the Humbert's curves in $\mathbb{P}^4$. For every $i=0,\dots,5$, we can consider the covering $\pi_i:X_5\rightarrow X_5/\langle \sigma_i\rangle$ induced by the involution $\sigma_i$. This is a two-to-one covering ramified at 16 points obtained as the intersection points of $X_5$ with the hyperplane $V(x_i)$. In addition, the quotient of $X_5/\langle \sigma_i\rangle$ is a Humbert's curve in $\mathbb{P}^4$. This double covering can be interpreted geometrically as the projection of $X_5$ with center $e_i$ onto the hyperplane $V(x_i)$.

Next result shows that a Humbert-Edge's curve of type 5 is always contained in a Kummer surface. It is a consequence of the fact noted by Edge in \cite{Edg77} that a Humbert-Edge's curve of type $n$ can be written in a normal form.

\begin{proposition}\label{he5kummer}
Let $X_5\subset\mathbb{P}^5$ be a Humbert-Edge's curve of type 5. There exists a Kummer surface in $\mathbb{P}^5$ which contains $X_5$.
\end{proposition}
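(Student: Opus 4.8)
The plan is to take the four diagonal quadrics $Q_0,\dots,Q_3$ defining $X_5$ and exhibit a single additional diagonal quadric $Q_4$ such that the three quadrics $Q_0,Q_1,Q_4$ (or some triple, after a suitable change among the pencil of quadrics through $X_5$) cut out a smooth Kummer surface in $\mathbb{P}^5$ containing $X_5$. The guiding observation is that $X_5$ is a complete intersection of four quadrics, so it lies on the whole $4$-dimensional linear system $\langle Q_0,Q_1,Q_2,Q_3\rangle$ of diagonal quadrics; any three independent members of this system cut out a surface containing $X_5$, and the task is to choose three of them so that the resulting surface is a smooth $K3$ that can be put in the normal form \eqref{normalkummer}.

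\emph{First} I would use Edge's normal form: by the fact cited from \cite{Edg77}, the equations of $X_5$ can be rewritten, after a projective change of coordinates, in the normal form so that the coefficient matrix is built from distinct scalars $a_0,\dots,a_5$, with the $i$-th quadric being $\sum_{j=0}^5 a_j^{\,i}x_j^2$ for $i=0,1,2,3$. In other words one may assume $a_{ij}=a_j^{\,i}$, a Vandermonde-type structure. \emph{Next}, I would set $Q_4=\sum_{j=0}^5 a_j^{\,2}x_j^2$ paired with $Q_0=\sum x_j^2$ and $Q_1=\sum a_j x_j^2$: these three quadrics $Q_0,Q_1,Q_2$ are precisely of the shape \eqref{normalkummer} with the \emph{same} distinct scalars $a_j$. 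By the discussion in Section \ref{hyperkummer}, the complete intersection $S=V(Q_0,Q_1,Q_2)$ of these three diagonal quadrics is exactly the Kummer surface $\mathrm{Km}(J(C))$ attached to the hyperelliptic curve $C:\,y^2=\prod_{j=0}^5(x-a_j)$, since the $a_j$ are pairwise distinct. \emph{Then} $X_5=V(Q_0,Q_1,Q_2,Q_3)\subseteq V(Q_0,Q_1,Q_2)=S$, so $X_5$ is contained in this Kummer surface, which is the desired conclusion.

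\emph{I expect the main obstacle} to be verifying that the chosen triple $Q_0,Q_1,Q_2$ really does cut out a \emph{smooth} surface, rather than a degenerate or singular complete intersection: the definition of Kummer surface adopted in the paper (a \emph{smooth} complete intersection of three diagonal quadrics in the normal form with distinct $a_j$) requires this smoothness, and one must confirm that the normal form with pairwise-distinct $a_j$ guarantees it. This is where I would lean on part (3) of the Lemma, which asserts that every $4$-minor of $(a_{ij})$ is non-degenerate; together with the Vandermonde structure and the distinctness $a_j\neq a_k$, the analogous non-degeneracy of the relevant $3$-minors forces the complete intersection $S$ to be smooth along $X_5$, and in fact everywhere, placing it in the hypotheses of \cite{Shio77} so that $S\cong\mathrm{Km}(J(C))$. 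A secondary point to check is that Edge's normal form can be realized with the \emph{same} coordinates that present $X_5$, i.e. that the coordinate change bringing $X_5$ to normal form is genuinely a projective automorphism of $\mathbb{P}^5$ under which the quadric pencil transforms compatibly; once this is in place the containment $X_5\subseteq S$ is immediate and the proof concludes.
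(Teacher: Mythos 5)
Your proposal is correct and follows essentially the same route as the paper: pass to Edge's normal form $Q_i=\sum_{j}a_j^{\,i}x_j^2$ with distinct $a_j$, and observe that $Q_0,Q_1,Q_2$ cut out the Kummer surface $\mathrm{Km}(J(C))$ of the hyperelliptic curve $y^2=\prod_{j}(x-a_j)$, which therefore contains $X_5$. The only differences are cosmetic: the paper derives the normal form on the spot (via the unique rational normal curve through the six coefficient points in $\mathbb{P}^3$) rather than citing \cite{Edg77}, and the smoothness you worry about comes for free from Shioda's identification of the complete intersection with $\mathrm{Km}(J(C))$, so the appeal to non-degenerate minors is unnecessary.
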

\begin{proof}
Assume that $X_5$ is given by the equations
\begin{equation*}
 Q_i=\sum_{j=0}^5 a_{ij} x_j^2, \;\;\; i=0,\dots,3
\end{equation*}
where $a_{ij}\in\mathbb{C}$. For each $j=0,\dots,5$, consider the coefficients $a_{ij}$ as the entries of the point $p_j=(a_{0j}:a_{1j}:a_{2j}:a_{3j})$ in the projective space $\mathbb{P}^3$. We have six points $p_0,\dots,p_5$ in $\mathbb{P}^3$ that are in general position, so there exists a unique rational normal curve $C\subset\mathbb{P}^3$ through these points. Finally, take a change of coordinates of $\mathbb{P}^3$ such that $C$ is in the standard parametric form, then we may assume that $p_j=(1:a_j:a_j^2:a_j^3)$ for all $j=0,\dots,5$. Therefore, we obtain that $X_5$ is given by the equations
\begin{equation}\label{normalx5}
 Q_i=\sum_{j=0}^5 a_{j}^i x_j^2, \;\;\; i=0,\dots, 3
\end{equation}
with $a_j\in\mathbb{C}, j=0,\dots,5$ and $a_i\neq a_i\hskip1mm\forall i\neq j$. The quadrics $Q_0,Q_1,Q_2$ define the Kummer surface associated with the hyperelliptic curve $y^2=\prod_{j=0}^5 (x-a_j)$. Therefore, $X_5$ is contained in a Kummer surface.
\end{proof}

\begin{remark}\label{x5hyp2}
 Note that given a Humbert-Edge's curve $X_5$ of type 5 in normal form \eqref{normalx5}, by the above proposition it is always possible to find an hyperelliptic curve $C$ such that the Kummer surface $\mathrm{Km}(J(C))$ contains $X_5$. Reciprocally, given an hyperelliptic curve $C$ of genus $2$, by the discussion in Section \ref{hyperkummer}, in a natural way the associated Kummer surface $\mathrm{Km}(J(C))$ contains a Humbert-Edge's curve of type 5 whose equations are in the normal form \eqref{normalx5}. Denote by $\mathcal M_g$ the moduli space of smooth and irreducible curves of genus $g$. Note that Proposition \ref{he5kummer} let us see that a Humbert-Edge's curve of type 5 depends of three parameters in $\mathcal M_{17}$, in fact in Section \ref{moduli} we prove that the moduli space of Humbert-Edge's curve of type 5 is isomorphic to the moduli space of hyperelliptic curves of genus 2.
\end{remark}

Next we present a characterization for Humbert-Edge's curves of type 5 using the lines on Kummer surfaces. 

\begin{theorem}
 Let $X\subset\mathbb{P}^5$ be an irreducible, non-degerated and non-singular curve of degree 16 and genus 17. The following statements are equivalent: 
 \begin{enumerate}
  \item[(i)] $X$ is a Humbert-Edge's curve of type 5.
  \item[(ii)] $X$ admits six involutions $\sigma_0,\dots,\sigma_5$ such that $\langle \sigma_0,\dots,\sigma_5\rangle\cong(\mathbb{Z}/2\mathbb{Z})^5$, $\sigma_0\cdots\sigma_5=1$ and the quotient $X/\langle\sigma_i\rangle$ is a Humbert's curve for every $i=0,\dots,5$.
  \item[(iii)] There exists a Kummer surface $S$ which contains $X$ and such that the intersection of $X$ with the 16 lines $\ell$ and $\ell_{ij}$ is at most one point.
  \item[(iv)] There exists a Kummer surface $S$ which contains $X$ and such that the intersection of $X$ with the 16 lines $\ell_{i}$ and $\ell_{ijk}$ is at most one point.
 \end{enumerate}
\end{theorem}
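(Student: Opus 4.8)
The plan is to take statement (i) as the hub and prove (i) $\Leftrightarrow$ (ii), the two implications (i) $\Rightarrow$ (iii) and (i) $\Rightarrow$ (iv), and the converse (iii) $\Rightarrow$ (i), with (iv) $\Rightarrow$ (i) handled by the same argument applied to the complementary family of lines. The equivalence (i) $\Leftrightarrow$ (ii) is the intrinsic one. For (i) $\Rightarrow$ (ii) the diagonal equations directly give the coordinate involutions $\sigma_i\colon x_i\mapsto -x_i$; their product changes the sign of every coordinate and is therefore the identity of $\mathbb{P}^5$, so $\langle\sigma_0,\dots,\sigma_5\rangle\cong(\mathbb{Z}/2\mathbb{Z})^5$, and the identification of each quotient $X/\langle\sigma_i\rangle$ with a Humbert curve in $\mathbb{P}^4$ is exactly the projection from $e_i$ recalled before the statement. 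For (ii) $\Rightarrow$ (i) I would show that the degree-$16$ line bundle $L$ defining the embedding $X\hookrightarrow\mathbb{P}^5$ is invariant under $E=\langle\sigma_0,\dots,\sigma_5\rangle$, so that $E\cong(\mathbb{Z}/2\mathbb{Z})^5$ acts linearly on $H^0(X,L)\cong\mathbb{C}^6$; decomposing this representation into characters yields coordinates $x_0,\dots,x_5$ in which each $\sigma_i$ is diagonal, and the hypothesis that every quotient is the canonical genus-$5$ curve cut out by three diagonal quadrics in $\mathbb{P}^4$ forces the net of quadrics through $X$ to be diagonal as well. This equivalence is essentially the group-theoretic characterization of \cite{FZ18}, which may be cited directly.

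For (i) $\Rightarrow$ (iii) and (i) $\Rightarrow$ (iv) I would invoke Proposition \ref{he5kummer} to put $X$ in the normal form \eqref{normalx5} inside the Kummer surface $S=V(Q_0,Q_1,Q_2)$, so that $X=S\cap V(Q_3)$ with $Q_3=\sum_{j} a_j^3 x_j^2$. Writing the line \eqref{line_kummer} homogeneously as $x_j=(t_0+a_j t_1)/\sqrt{f'(a_j)}$ and substituting into $Q_3$ produces a binary quadratic form in $(t_0,t_1)$ whose three coefficients are $\sum_{j} a_j^{3+\epsilon}/f'(a_j)$ for $\epsilon=0,1,2$. The classical residue identity $\sum_{j} a_j^{k}/f'(a_j)=h_{k-5}(a_0,\dots,a_5)$ makes the first two coefficients vanish and the third equal to $1$, so $Q_3|_{\ell}=t_1^2$ is a perfect square and $X$ meets $\ell$ in a single point. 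Since a diagonal $Q_3$ is insensitive to the sign changes defining $\ell_i,\ell_{ij},\ell_{ijk}$, the identical computation shows that $X$ meets every one of the $32$ lines in exactly one point; in particular (iii) and (iv) both hold for this $S$.

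The substance of the theorem is the converse (iii) $\Rightarrow$ (i). Given a Kummer surface $S$ containing $X$, after the normalization of \eqref{normalkummer} its first three quadrics are diagonal and $X=S\cap V(Q_3)$ for a single quadric $Q_3=\sum_j b_j x_j^2+\sum_{j<k}c_{jk}x_jx_k$; since $X$ is irreducible of genus $17$ it contains no line, so each $Q_3|_{\ell}$ is a nonzero binary quadratic and the condition ``at most one point'' means exactly that its discriminant vanishes. It suffices to prove that $Q_3$ is diagonal, for then all four quadrics are diagonal and $X$ is a Humbert-Edge's curve of type $5$; moreover $Q_3$ is diagonal precisely when $X$ is $E$-invariant, because $\sigma_i(X)=X$ forces $\sigma_iQ_3-Q_3=-2\sum_{k\neq i}c_{ik}x_ix_k$ to lie in $\langle Q_0,Q_1,Q_2\rangle$, a span containing no cross terms, whence every $c_{ik}=0$. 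Thus the whole matter reduces to showing that tangency of $Q_3$ to the $16$ trope-lines $\ell,\ell_{ij}$ (respectively to the $16$ node-lines $\ell_i,\ell_{ijk}$) forces all off-diagonal coefficients $c_{jk}$ to vanish.

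I expect this last step to be the main obstacle, and I would attack it through the character theory of the even subgroup $E^+\cong(\mathbb{Z}/2\mathbb{Z})^4$. The $16$ trope-lines form a single free $E^+$-orbit of $\ell$, and $\sigma_I\in E^+$ acts on the monomial $x_mx_n$ by the sign $(-1)^{|\{m,n\}\cap I|}=\chi_m(I)\chi_n(I)$, where $\chi_m(I)=(-1)^{[m\in I]}$. A direct check shows that the $15$ characters $\chi_m\chi_n$ of $E^+$ are pairwise distinct and, together with the trivial character carrying the diagonal part, exhaust all $16$ characters of $E^+$. Fourier analysis over $E^+$ then sets up a perfect correspondence between the $16$ tangency conditions and the decomposition $\{\text{diagonal part}\}\cup\{c_{jk}\}_{j<k}$, and I would linearize the $16$ discriminant equations at the known diagonal solution $b_j=a_j^3,\ c=0$ and use the orthogonality of these characters to show that the resulting linear system isolates and annihilates each $c_{jk}$, leaving only the three-dimensional family of diagonal representatives, all defining the same $X=S\cap V(\sum_j a_j^3x_j^2)$. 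The same analysis applied to the odd coset $E\setminus E^+$, whose sign-characters separate the cross terms equally well, gives (iv) $\Rightarrow$ (i); the delicate point is controlling the nonlinearity of the discriminant so that the character-theoretic separation, transparent at the linear level, indeed pins the solution set down to the diagonal locus.
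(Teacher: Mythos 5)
Your proposal follows the same architecture as the paper's proof: (i)$\Leftrightarrow$(ii) is delegated to \cite{FZ18}; (i)$\Rightarrow$(iii)/(iv) is the evaluation of the fourth diagonal quadric on the parametrized lines \eqref{line_kummer}; and the converse is reduced, via writing $X=S\cap V(Q)$ for a single quadric $Q$, to showing that the sixteen line conditions kill the off-diagonal coefficients of $Q$. Two local comments on the easy directions. First, your homogeneous computation $Q_3|_\ell=t_1^2$ is correct and in fact \emph{sharper} than the paper's, which asserts $X\cap\ell=\emptyset$; that is impossible (a nonzero complex binary quadratic form always has a zero), and indeed all four quadrics vanish at the point $(1/\sqrt{f'(a_0)}:\cdots:1/\sqrt{f'(a_5)})$, i.e.\ $(t_0:t_1)=(1:0)$, which the affine parametrization misses, so $X$ meets $\ell$ in exactly that one point. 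Second, the claim that $X$ is cut out on $S$ by a single quadric is not automatic from $X\subset S$ and the numerical data; it needs Knutsen's theorem \cite[Theorem 6.1(3)]{Knu02} (equivalently $X\sim 2H$ on $S$ via the Hodge index theorem), which the paper cites and you merely assert.

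The genuine gap is exactly the step you yourself flag as delicate, and it does not go away: for (iii)$\Rightarrow$(i) you correctly identify the hypothesis on a line $m$ as the vanishing of the discriminant of $Q|_m$, so the sixteen conditions form a system of sixteen \emph{quadratic} equations in the coefficients of $Q$, and your proposed resolution (linearize at the diagonal solution, separate the cross terms by character orthogonality over $E^+$) only computes a tangent space, which cannot pin down the solution set. Concretely, writing $Q|_\ell=\alpha t_0^2+\beta t_0t_1+\gamma t_1^2$, at $Q_3$ one has $(\alpha,\beta,\gamma)=(0,0,1)$, so $d(\beta^2-4\alpha\gamma)=-4\,d\alpha$ there; the linearized system is exactly ``$\alpha_g=0$ for all sixteen lines'', whose solution space is the five-dimensional span of $\sum_j a_j^ix_j^2$, $i=0,\dots,4$. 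This is strictly larger than the germ of the true solution set: the direction $Q_4=\sum_j a_j^4x_j^2$ satisfies the linearized equations, yet $Q_4|_\ell=t_1(2t_0+(\sum_j a_j)t_1)$, so $Q_3+sQ_4$ meets $\ell$ in two distinct points for all small $s\neq 0$. Hence the discriminant variety is singular at $Q_3$, first-order information cannot exclude non-diagonal branches through $Q_3$, and, worse, a quadric $Q$ satisfying (iii) need not lie anywhere near $Q_3$, so local analysis does not even address the components one must rule out. You also cannot close the hole by borrowing the paper's corresponding step: the paper replaces the discriminant condition by the linear condition ``$\alpha_g=0$'', justified by the claim that at most one intersection point forces the leading coefficient to vanish, and that claim is false pointwise (a binary quadratic with $\alpha\neq 0$ and zero discriminant also meets the line in a single point). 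So you have isolated the correct condition where the paper has not, but neither your character-theoretic plan as stated nor the paper's linear shortcut actually proves that tangency to the sixteen lines forces $Q$ to be diagonal; that implication is the heart of (iii)$\Rightarrow$(i) and (iv)$\Rightarrow$(i), and it remains unproved in your proposal.
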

\begin{proof} (i)$\Leftrightarrow$(ii) We have this equivalence by \cite[Theorem 3.4]{FZ18}. 

(i)$\Rightarrow$(iii)  Assume that $X$ is Humbert-Edge's curve of type 5. Proposition \ref{he5kummer} implies that $X$ is contained in a Kummer surface $S$. So, we may assume the existence of different scalars $a_0,\dots,a_5\in\mathbb{C}$ such that $X$ is given by the equations
\begin{equation*}
 Q_i=\sum_{j=0}^5 a_{j}^i x_j^2, \;\;\;i=0,\dots,3
\end{equation*}
and $S$ is given by the equations $Q_0,Q_1,Q_2$. Consider the line $\ell$ in parametric form as in $\eqref{line_kummer}$. A direct computation shows that for every $t\in\mathbb{C}$,
\begin{equation*}
 Q_3\left(\frac{t+a_0}{\sqrt{f'(a_0)}}, \frac{t+a_1}{\sqrt{f'(a_1)}},\frac{t+a_2}{\sqrt{f'(a_2)}},\frac{t+a_3}{\sqrt{f'(a_3)}},
 \frac{t+a_4}{\sqrt{f'(a_4)}},\frac{t+a_5}{\sqrt{f'(a_5)}}\right)=1.
\end{equation*}
Therefore, $X$ does not intersect the line $\ell$ and the diagonal form of the third equation implies that $X$ also does not intersect the lines $\ell_{ij}$ for every $i,j\in\{0,\dots,5\}$ with $i\neq j$.

(iii)$\Rightarrow$(i) Assume that $X$ is contained in a Kummer surface $S$ defined by the equations
\begin{equation*}
 Q_i= \sum_{j=0}^5 a_j^i x_j^2, \;\;\; i=0,1, 2,
\end{equation*}
where $a_i\neq a_j$ if $i\neq j$, and such that $X$ does not intersect the lines $\ell$, $\ell_{ij}$ in two different points for all $i,j\in\{0,\dots,5\}$ with $i\neq j$. We denote $f(x)=\prod_{i=0}^5 (x-a_i)$. Since $S$ is a $K3$ surface of type $(2,2,2)$ in $\mathbb{P}^5$ containing $X$, our situation should be one of the cases determined by Knutsen in \cite[Theorem 6.1 (3)]{Knu02}. In fact, we are in the case a) of the latter, in Knutsen's notation we have $n=4$, $d=16$, $g=17$ and $g=d^2/16 +1$. Moreover, such result implies that $X$ is the complete intersection of $S$ and a hypersurface of degree $d/8$, i.e. $X$ is the complete intersection of four quadrics. Denote the fourth quadric by
\begin{equation*}
 Q=\sum_{j=0}^5 d_j x_j^2 + \sum_{0\leq i < j\leq 5} d_{ij} x_ix_j.
\end{equation*}
Now, when we evaluate the quadric $Q$ in the parametric form of $\ell$ we obtain a quadratic equation with parameter $t$ with leading coefficient 
\begin{equation*}
 \frac{1}{\prod_{i=0}^5 f'(a_i)} \left( \sum_{i=0}^5 f'(a_0)\cdots \widehat{f'(a_i)} \cdots f'(a_5) d_i 
+ \sum_{0\leq i<j\leq 5}  f'(a_0)\cdots \sqrt{f'(a_i)}\sqrt{f'(a_j)} \cdots f'(a_5) d_{ij} \right).
\end{equation*}
The hypothesis that $Q$ does not intersect the line $\ell$ in two different points implies that such coefficient vanishes (the coefficient of $t$ could vanishes but in such case the constant term must be different from zero). This also occurs for all of the 15 lines $\ell_{ij}$ by hypothesis, and then we have 16 imposed conditions. The leading coefficient for the line $\ell_{ij}$ can be deduced from the above one, in fact, since the line $\ell_{ij}$ is obtained from $\ell$ by the application of $\sigma_i\sigma_j$, it is enough to add a negative sign to the coefficient of the terms $d_{rs}$ whenever $r$ or $s$ are equal to $i$ or $j$. Solving the linear system in the variables $d_j$'s and $d_{ij}$'s, we obtain that all the $d_{ij}$'s are equal to zero, that $d_0,d_1,d_2,d_3$ and $d_4$ are free parameters and
\begin{equation}\label{d5diagonal}
 d_5= - f'(a_5)\left(\frac{d_0}{f'(0)}+\frac{d_1}{f'(a_1)}+\frac{d_2}{f'(a_2)}+\frac{d_3}{f'(a_3)}+\frac{d_4}{f'(a_4)} \right).
\end{equation}
Therefore, $X$ is the complete intersection of four diagonal quadrics in $\mathbb{P}^5$ and we conclude that it is a Humbert-Edge's curve of type 5.

(iii)$\Leftrightarrow$(iv) As above, assuming that $X$ is contained in a Kummer surface $S$ defined by the equations
\begin{equation*}
 Q_i= \sum_{j=0}^5 a_j^i x_j^2, \;\;\; i=0, 1, 2,
\end{equation*}
where $a_i\neq a_j$ if $i\neq j$, by \cite[Theorem 6.1 (3)]{Knu02} we ensure that $X$ is the complete intersection of $S$ and a hypersurface of degree 2. Under the hypothesis of (iii) or (iv), when we solve the system of equations in the parameter $t$ as we previously did, we obtained that the coefficient of every mixed term vanishes and $d_5$ have the form of \eqref{d5diagonal}. The remaining conditions, (iv) or (iii) respectively, do not impose new conditions on the coefficients.   
\end{proof}

\subsection{Theta characteristics}\label{theta}
In this section we use the coverings given by the subgroups generated by involutions $\sigma_i$'s and the Rosenhain tetrahedra of singular Kummer surfaces to construct theta characteristics on a Humbert-Edge's curve of type 5. We recall the definition of a theta characteristic and a vanishing thetanull.

\begin{definition}
Let $X$ be an algebraic curve. A line bundle $L$ on $X$ is a \emph{theta characteristic} if $L^2\sim K_X$. A theta characteristic $L$ is \emph{even} (respectively, \emph{odd}) if $h^0(L)$ is even (respectively, odd). A \emph{vanishing thetanull} is an even theta characteristic $L$ such that $h^0(L)>0$.
\end{definition}
 
Recall that given a Humbert-Edge's curve $X_5$ of type 5, for every $i=0,\dots,5$ the double covering $\pi_i:X_5\rightarrow X_5/\langle \sigma_i\rangle$ is ramified at 16 points obtained as the intersection points of $X_5$ with the hyperplane $V(x_i)$. Denote by $R_i=p_{i1}+\cdots+p_{i16}$ the ramification divisor for every $i=0,\dots,5$.  

\begin{proposition}
Let $X_5\subset\mathbb{P}^5$ be a Humbert-Edge's curve of type 5. $X_5$ admits 26 odd theta characteristics with 3 sections, 6 of them corresponds to the line bundle associated with the ramification divisors and the remaining 20 are induced by the coverings associated to the subgroups generated by three different involutions $\sigma_i$, $\sigma_j$ and $\sigma_k$ for $i,j,k\in\{0,\dots,5\}$.
\end{proposition}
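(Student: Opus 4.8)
The plan is to construct the $26$ odd theta characteristics explicitly and verify in each case that the associated line bundle squares to the canonical bundle and has exactly three sections. The curve $X_5$ has genus $17$, so $K_{X_5}$ has degree $2g-2=32$, and any theta characteristic has degree $16$. This matches the degree of each ramification divisor $R_i$ (which consists of the $16$ intersection points of $X_5$ with $V(x_i)$), so these divisors are the natural first candidates.

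\textbf{The six ramification theta characteristics.} First I would treat the line bundles $\mathcal{O}_{X_5}(R_i)$ for $i=0,\dots,5$. For the double cover $\pi_i:X_5\to Y_i:=X_5/\langle\sigma_i\rangle$, the Hurwitz/ramification formula gives $K_{X_5}\sim \pi_i^*K_{Y_i}\otimes\mathcal{O}_{X_5}(R_i)$. The key is to identify $\pi_i^*K_{Y_i}$ with $\mathcal{O}_{X_5}(R_i)$ as well, which holds precisely because $Y_i$ is a Humbert's curve: one computes that $R_i$ coincides with the pullback of a suitable canonical divisor on $Y_i$, so that $2R_i\sim K_{X_5}$ and $\mathcal{O}_{X_5}(R_i)$ is a theta characteristic. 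Geometrically, $R_i$ is cut out on $X_5$ by the hyperplane $V(x_i)$; I would use that $V(x_i)\cap V(x_j)$ cuts out $2R_i$-type relations among the coordinate hyperplane sections to establish the linear equivalence $2R_i\sim K_{X_5}$. The section count $h^0(\mathcal{O}_{X_5}(R_i))=3$ should follow from the observation that the hyperplane class restricted to $X_5$ has $h^0=6$ and splits under $\sigma_i$ into eigenspaces, with the $R_i$-piece contributing a three-dimensional space; parity then forces these characteristics to be odd.

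\textbf{The twenty subgroup theta characteristics.} For each of the $\binom{6}{3}=20$ choices of three distinct indices $\{i,j,k\}$, I would consider the intermediate quotient determined by the subgroup $\langle\sigma_i,\sigma_j,\sigma_k\rangle$, or more efficiently the divisor $R_i+R_j+R_k$ adjusted appropriately. Since the $20$ triples pair up with their complementary triples (note $\sigma_0\cdots\sigma_5=1$ means $\sigma_i\sigma_j\sigma_k=\sigma_l\sigma_m\sigma_n$ for the complementary triple), there are genuinely $20$ such subgroups. The natural candidate is a theta characteristic built from the covering $X_5\to X_5/\langle\sigma_i,\sigma_j,\sigma_k\rangle$; I would compute its ramification divisor and argue, as in the rank-one case, that half of it is linearly equivalent to a canonical divisor. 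The three sections should again arise from a coordinate/eigenspace decomposition under the action of the chosen involutions.

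\textbf{Where the main difficulty lies.} The principal obstacle is establishing that $h^0=3$ exactly (not merely $\geq 1$ or $\geq 2$) and that these characteristics are \emph{odd}, i.e. that the parity of $h^0$ is correctly $3$ rather than some even number, so that they are genuine odd theta characteristics rather than vanishing thetanulls. I expect this to require a careful eigenspace analysis of $H^0(X_5,\mathcal{O}_{X_5}(1))$ under the group $E\cong(\mathbb{Z}/2\mathbb{Z})^5$: the six coordinate functions $x_0,\dots,x_5$ span the hyperplane sections and transform by the six characters $\sigma_i\mapsto\pm 1$, and decomposing $H^0$ of each candidate theta characteristic into $E$-isotypic pieces should pin down the dimension as exactly $3$. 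A secondary subtlety is verifying the $20$ subgroup characteristics are genuinely distinct from one another and from the six ramification ones, for which I would use that they correspond to distinct ramification loci and hence distinct divisor classes. The remaining verifications — the degree count, the linear equivalences $2D\sim K_{X_5}$, and distinctness — are then routine given the explicit diagonal equations and the $E$-action.
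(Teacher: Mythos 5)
Your derivation of $2R_i\sim K_{X_5}$ is sound, and it is in fact a different route from the paper's: the paper passes through the degree-four quotients $X_5/\langle\sigma_i,\sigma_j\rangle$, shows these are elliptic curves, obtains $K_{X_5}\sim R_i+R_j$ for every pair from the ramification formula, and cancels pairs to get $R_i\sim R_j$ and $K_{X_5}\sim 2R_i$; you instead use the double cover onto the Humbert curve $Y_i$ and the identification $\pi_i^*K_{Y_i}\sim R_i$. The genuine gap is at the step you yourself call the principal obstacle: the count $h^0=3$ and the parity. You propose to extract it from an eigenspace decomposition of $H^0(X_5,\mathcal{O}_{X_5}(1))$ under $E$, but this cannot work: $h^0(\mathcal{O}_{X_5}(R_i))$ is an intrinsic invariant of the line bundle, not the dimension of an eigensubspace. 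Worse, the very observation you rely on --- that $R_i$ is cut out on $X_5$ by $V(x_i)$ --- identifies $\mathcal{O}_{X_5}(R_i)$ with $\mathcal{O}_{X_5}(1)$ (the hyperplane divisor has degree $16$ and is supported on the $16$ points of $R_i$, hence is reduced and equals $R_i$), and for a nondegenerate complete intersection projective normality gives $h^0(\mathcal{O}_{X_5}(1))=6$; moreover $\sigma_i$ splits this $6$-dimensional space as $5+1$ (the span of the $x_j$ with $j\neq i$, and the span of $x_i$), never as $3+3$. So your route, carried out honestly, produces $6$ sections rather than $3$ and cannot deliver the proposition as stated. The paper's computation of the section count, which your proposal has no substitute for, is of a completely different nature: it shows $R_i\sim\pi_{ijk}^*\mathcal{O}_{\mathbb{P}^1}(2)$ using the genus-zero quotient $X_5/\langle\sigma_i,\sigma_j,\sigma_k\rangle$ together with $R_j+R_k\sim K_{X_5}$, and then evaluates $h^0\bigl(\pi_{ijk}^*\mathcal{O}_{\mathbb{P}^1}(2)\bigr)=h^0\bigl(\mathcal{O}_{\mathbb{P}^1}(2)\otimes{\pi_{ijk}}_*\mathcal{O}_{X_5}\bigr)$ by the projection formula, after decomposing ${\pi_{ijk}}_*\mathcal{O}_{X_5}$ into $\mathcal{O}_{\mathbb{P}^1}$ plus line bundles of negative degree; the number $3$ arises there as $h^0(\mathcal{O}_{\mathbb{P}^1}(2))$.

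Two further steps of your plan fail as written. For the twenty triple covers, ``half of the ramification divisor'' of $\pi_{ijk}$ has degree $24\neq 16$, so no such divisor can be a theta characteristic; the usable relation, which the paper derives, is $R_i\sim\pi_{ijk}^*(-K_{\mathbb{P}^1})$. And your distinctness argument --- distinct ramification loci, hence distinct divisor classes --- is a non sequitur, since linear equivalence does not see supports. Indeed, the relations $K_{X_5}\sim R_i+R_j$ for all pairs (available in both your approach and the paper's) force $R_0\sim R_1\sim\cdots\sim R_5$, and the relation $\pi_{ijk}^*\mathcal{O}_{\mathbb{P}^1}(2)\sim R_i$ places the triple-cover bundles in that same linear equivalence class; so the $26$ objects being counted cannot be separated by their supports, and any complete argument must confront these equivalences directly (the paper itself offers no distinctness argument).
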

\begin{proof}
For $i,j\in\{0,\dots,5\}$, consider the subgroup generated by the involutions $ \sigma_i$ and $\sigma_j$ and take the induced covering of degree four $\pi_{ij}:X_5 \rightarrow X_5/\langle \sigma_i,\sigma_j\rangle$. This is a simply ramified covering with the 32 ramified points $p_{i1},\dots,p_{i16},p_{j1},\dots,p_{j16}$. Since $X_5/\langle \sigma_i,\sigma_j \rangle=E_{ij}$ is an elliptic curve, it follows that
\begin{equation*}
 K_{X_5} \sim \pi_{ij}^*(K_{E_{ij}})+R_i+R_j = R_i+R_j.
\end{equation*}
So, $K_{X_5}\sim R_i+R_j$ for all $i,j\in\{0,\dots,5\}$. Fix and index $i\in\{0,\dots,5\}$ and take $j,k\in\{0,\dots,5\}\backslash\{i\}$ with $j\neq k$. Using the fact that 
\begin{equation*}
 R_i+R_k \sim K_{X_5} \sim R_k+R_j,
\end{equation*}
we have that $R_i\sim R_j$. Thus, $K_{X_5}\sim R_i+R_j\sim 2 R_i$ and $R_i$ is a theta characteristic.

Now, let $i,j,k\in\{0,\dots,5\}$ be different indices. The covering of degree eight $\pi_{ijk}:X_5\rightarrow X_5/\langle \sigma_i,\sigma_j,\sigma_k\rangle$ is a simply ramified covering in the 48 points $p_{i1},\dots,p_{i16}, p_{j1},\dots,p_{j16}, p_{k1},\dots,p_{k16}$. Note that $X_5/\langle \sigma_i,\sigma_j,\sigma_k\rangle \cong\mathbb{P}^1$. Then,
\begin{equation*}
 K_{X_5} \sim \pi_{ijk}^*(K_{\mathbb{P}^1})+R_i+R_j+R_k\sim \pi_{ijk}^*(K_{\mathbb{P}^1})+R_i+K_{X_5}.
\end{equation*}
Therefore, we have that $\pi_{ijk}^*(-K_{\mathbb{P}^1})\sim R_i$ and we conclude that $\pi_{ijk}^*(\mathcal{O}_{\mathbb{P}^1}(2))$ is a theta characteristic of $X_5$. 

Next step is to compute $h^0(\pi_{ijk}^*(\mathcal{O}_{\mathbb{P}^1}(2))$. To do so, we will use the fact that $h^0(\pi_{ijk}^*(\mathcal{O}_{\mathbb{P}^1}(2)))=h^0({\pi_{ijk}}_*\pi_{ijk}^*(\mathcal{O}_{\mathbb{P}^1}(2)))$. The covering $\pi_{ijk}$ is determined by a line bundle $\mathcal{L}$ on $\mathbb{P}^1$ such that $\mathcal{L}^8=\mathcal{O}_{\mathbb{P}^1}({\pi_{ijk}}_*(Ri+R_j+R_k))$ and in addition, we have that ${\pi_{ijk}}_*\mathcal{O}_{X_5}=\mathcal{O}_{\mathbb{P}^1}\oplus\mathcal{L}^{-1}\oplus\cdots\oplus\mathcal{L}^{-7}$. By the projection formula:
\begin{align*}
 {\pi_{ijk}}_*\pi_{ijk}^*(\mathcal{O}_{\mathbb{P}^1}(2)) 
 & = \mathcal{O}_{\mathbb{P}^1}(2) \otimes {\pi_{ijk}}_*\mathcal{O}_{X_5} \\
 & = \mathcal{O}_{\mathbb{P}^1}(2) \otimes ( \mathcal{O}_{\mathbb{P}^1}\oplus\mathcal{L}^{-1}\oplus\cdots\oplus\mathcal{L}^{-7} ) \\
 & = (\mathcal{O}_{\mathbb{P}^1}(2) \otimes \mathcal{O}_{\mathbb{P}^1}) \oplus (\mathcal{O}_{\mathbb{P}^1}(2) \otimes \mathcal{L}^{-1} ) \oplus\cdots\oplus (\mathcal{O}_{\mathbb{P}^1}(2) \otimes\mathcal{L}^{-7}).
\end{align*}
From the equality $\mathcal{L}^8=\mathcal{O}_{\mathbb{P}^1}({\pi_{ijk}}_*(Ri+R_j+R_k))$ we get that the degree of $\mathcal{L}$ is equal to 6, this implies that $\mathcal{O}_{\mathbb{P}^1}(2) \otimes\mathcal{L}^{-n}$ has no sections for every $n=1,\dots, 7$. Therefore, ${\pi_{ijk}}_*\pi_{ijk}^*(\mathcal{O}_{\mathbb{P}^1}(2))=\mathcal{O}_{\mathbb{P}^1}(2)$ and it follows that $h^0(\pi_{ijk}^*(\mathcal{O}_{\mathbb{P}^1}(2)))=3$. Finally, the line bundle $\mathcal{O}_{X_5}(R_i)$ has 3 sections since $R_i\sim \pi_{ijk}^*(-K_{\mathbb{P}^1})$.
\end{proof} 

Using the geometry of the Kummer surface, we can determine some vanishing thetanulls on a Humbert-Edge's curve of type 5.

\begin{proposition}\label{x5theta}
Let $X_5\subset\mathbb{P}^5$ be a Humbert-Edge's curve of type 5. $X_5$ admits 80 vanishing thetanulls with 6 sections corresponding to Rosenhain tetrahedra of the associated singular Kummer surface $K$. \end{proposition}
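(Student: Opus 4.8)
The plan is to produce each vanishing thetanull as the restriction to $X_5$ of the Rosenhain divisor on the Kummer surface $S$ containing it, and to count its sections cohomologically on the $K3$ surface $S$. First I would fix a Rosenhain tetrahedron and take the associated divisor $D$ on $S$ given by Proposition \ref{rosenhain}, so that $D^2=8$, $\dim|D|=5$, and $|D|$ realizes the embedding $S\hookrightarrow\mathbb{P}^5$; hence $\mathcal{O}_S(D)$ is the hyperplane class. Since $S$ is a $K3$ surface, $\omega_S\cong\mathcal{O}_S$, and as $X_5$ is cut out on $S$ by the fourth quadric it lies in $|2D|$. Adjunction then gives
\begin{equation*}
 K_{X_5}\cong\mathcal{O}_S(X_5)|_{X_5}\cong\mathcal{O}_S(2D)|_{X_5},
\end{equation*}
so the line bundle $L:=\mathcal{O}_S(D)|_{X_5}$ satisfies $L^{2}\cong K_{X_5}$ and is a theta characteristic on $X_5$ of degree $D\cdot X_5=2D^2=16=g(X_5)-1$.

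To compute $h^0(L)$ I would use the restriction sequence
\begin{equation*}
 0\to\mathcal{O}_S(D-X_5)\to\mathcal{O}_S(D)\to L\to 0.
\end{equation*}
Because $X_5\sim 2D$, the kernel is $\mathcal{O}_S(-D)$. As $D$ is effective and nonzero, $H^0(S,\mathcal{O}_S(-D))=0$; and as $\mathcal{O}_S(D)$ is very ample, hence nef and big, one has $H^1(S,\mathcal{O}_S(D))=0$, so Serre duality on the $K3$ surface yields $H^1(S,\mathcal{O}_S(-D))\cong H^1(S,\mathcal{O}_S(D))^{\vee}=0$. The long exact sequence then gives $H^0(X_5,L)\cong H^0(S,\mathcal{O}_S(D))$, whence $h^0(L)=\dim|D|+1=6$. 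Since $6$ is even and positive, $L$ is a vanishing thetanull with six sections, and performing this construction for each of the $80$ Rosenhain tetrahedra of $K$ yields $80$ such bundles.

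The step I expect to be the main obstacle is proving that these $80$ vanishing thetanulls are pairwise distinct. Proposition \ref{rosenhain} states that all the Rosenhain divisors $D$ lie in a single linear equivalence class on $S$, so restricting the class of $D$ returns the same bundle $\mathcal{O}_{X_5}(1)$ for every tetrahedron; distinctness therefore cannot be detected from the class of $D$ on $S$ and must come from finer, tetrahedron-dependent data on $X_5$. The natural source is the contact configuration: by condition (iii) of the characterization theorem, each of the four trope conics of a tetrahedron meets $X_5$ in at most one point, and these four contact points genuinely vary with the tetrahedron even though the total divisor class does not. I would try to encode this variation as a $2$-torsion class $\eta_T\in\mathrm{Pic}^0(X_5)[2]$ attached to the combinatorics of the tetrahedron $T$, so that the thetanull attached to $T$ is $\mathcal{O}_{X_5}(1)\otimes\eta_T$ (still a theta characteristic, since $\eta_T^{2}\cong\mathcal{O}_{X_5}$), and then identify the $80$ tetrahedra with $80$ distinct even theta characteristics via the $2$-torsion/theta structure of $J(C)$. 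Verifying that these twists are distinct and that each twisted bundle retains exactly six sections is the crux of the proof; the existence of the thetanull and the computation $h^0=6$ are the routine parts.
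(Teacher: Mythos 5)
Your core argument is the paper's argument: the paper likewise restricts the Rosenhain divisor $D$ to $X_5$, notes $X_5\in|2D|$ (citing \cite[Proposition 3.1]{Knu02} rather than your direct observation that $D$ is the hyperplane class and $X_5$ is cut out on $S$ by a quadric), applies adjunction with $\omega_S\cong\mathcal{O}_S$, and computes $h^0(D|_{X_5})=6$ from the same exact sequence
$0\to\mathcal{O}_S(-D)\to\mathcal{O}_S(D)\to\mathcal{O}_{X_5}(D)\to 0$,
killing $H^1(S,-D)$ by Mumford vanishing where you use Serre duality together with $H^1(S,D)=0$ for a nef and big class on a $K3$; these are interchangeable. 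Up to this point your proof is correct and essentially identical to the paper's.

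Where you diverge is the distinctness question, and here you should know that the paper does not address it at all: its proof ends by ``recalling that there are 80 Rosenhain tetrahedra,'' i.e., it counts tetrahedra, not line bundles. Your suspicion is exactly right --- by Proposition \ref{rosenhain} all Rosenhain divisors lie in a single linear equivalence class on $S$, and since restriction $\mathrm{Pic}(S)\to\mathrm{Pic}(X_5)$ is a group homomorphism, all 80 restricted bundles coincide with $\mathcal{O}_{X_5}(1)$. But for the same reason your proposed repair cannot work: the thetanull attached to a tetrahedron $T$ by this construction is by definition $\mathcal{O}_{X_5}(D_T|_{X_5})$, and $D_T\sim D_{T'}$ on $S$ forces $D_T|_{X_5}\sim D_{T'}|_{X_5}$ on $X_5$, so every twist $\eta_T$ you could extract from the restriction is trivial; there is no finer tetrahedron-dependent bundle datum hiding in this construction. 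What genuinely depends on $T$ is only the effective divisor $D_T\cap X_5$ inside the one linear system $|\mathcal{O}_{X_5}(1)|$ (these 80 divisors are pairwise distinct, since the 80 hyperplanes are distinct and $X_5$ is non-degenerate). So the honest conclusion of both your argument and the paper's is: the construction yields one vanishing thetanull with six sections, represented by 80 distinguished Rosenhain-cut effective divisors; the number 80 counts tetrahedra, not pairwise non-isomorphic theta characteristics, and producing 80 distinct bundles would require a different construction entirely. Do not spend time on the 2-torsion idea --- it is ruled out by linear equivalence, not merely difficult.
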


\begin{proof}
By Proposition \ref{he5kummer} $X_5$ is contained in a Kummer surface $S$. Denote by $K$ the singular Kummer surface associated with $S$. For a Rosenhain tetrahedron on $K$, let $D$ be the associated divisor in $S$ (see Proposition \ref{rosenhain}). By \cite[Proposition 3.1]{Knu02} we have that $X_5$ and $D$ are dependent in $\mathrm{Pic}(S)$, in fact, $X_5$ appears as an element in the linear system $|2D|$. Using the fact that the canonical bundle of $S$ is trivial and that $X_5\in |2D|$, the adjunction formula implies that
 \begin{equation*}
  K_{X_5}=(K_S+X_5)|_{X_5}=(2D)|_{X_5}=2 D|_{X_5}.
 \end{equation*}
Thus, the divisor $D|_{X_5}$ is a theta characteristic. It only remains the calculation of $h^0(D|_{X_5})$. Twisting the exact sequence of sheaves
\begin{equation*}
 0 \rightarrow \mathcal{O}_S(-X_5) \rightarrow \mathcal{O}_S \rightarrow \mathcal{O}_{X_5} \rightarrow 0
\end{equation*}
we obtain the exact sequence
\begin{equation*}
 0 \rightarrow \mathcal{O}_S(-D) \rightarrow \mathcal{O}_S(D) \rightarrow \mathcal{O}_{X_5}(D) \rightarrow 0,
\end{equation*}
and therefore we obtain the exact sequence in cohomology
\begin{equation*}
 0 \rightarrow H^0(S,-D) \rightarrow H^0(S, D) \rightarrow H^0(X_5, D|{X_5}) \rightarrow H^1(S,-D).
\end{equation*}
We have $H^0(S,-D)=0$ and since $D$ is very ample, by Mumford vanishing theorem we obtain that $H^1(S,-D)=0$. Then, $H^0(S, D) = H^0(X_5, D|{X_5})$ and from $\mathrm{dim}|D|=5$ it follows that $h^0(X_5, D|{X_5})=6$. We conclude the proof recalling that there are 80 Rosenhain tetrahedra associated to a singular Kummer surface (see Proposition \ref{rosenhain}).
\end{proof}

We conclude this subsection with the following remarks:
\begin{itemize}
    \item The way to construct the vanishing thetanulls for a Humbert-Edge's curve of type 5 using the geometry of a Kummer surface differs completely to the classical case: a Humbert's curve $X$ admits exactly 10 vanishing thetanulls and can be constructed by taking the quotients of $X$ by the subgroup generated by two involutions (see the proof of \cite[Theorem 2.2]{FZ18}), the geometry of the Del Pezzo surface containing $X$ is not involved in such process.
    \item The procedure used in Proposition \ref{x5theta} to construct the vanishing thetanulls holds true for every smooth curve on degree 16 and genus 17 on a Kummer surface $S$. Indeed, if $Y$ is any smooth curve of degree 16 and genus 17 contained $S$, then using again \cite[Proposition 3.1]{Knu02} we have that $Y$ and $D$ are dependent on $\mathrm{Pic}(S)$ and the previous argument holds.
\end{itemize}

\subsection{Moduli space of Humbert-Edge's curves of type 5}\label{moduli}
As we mention in Remark \ref{x5hyp2}, given a Humbert-Edge's curve of type 5 always it is possible to associate a hyperlliptic curve of genus 2 and vice-versa. Here, we discuss about this fact and using the results of Carocca, G\'onzalez-Aguilera, Hidalgo and Rodr\'iguez \cite{CGHR08} we prove that the moduli space of Humbert-Edge's curve of type 5 is isomorphic to the moduli space of hyperelliptic curves of genus 2.

Given a Humbert-Edge's curve $X_5$ of type 5, using Edge's idea of consider the coefficients as points in $\mathbb{P}^3$ and the unique rational normal curve in $\mathbb{P}^3$ through them, one is able to write down the equations of $X_5$ in normal form (see the proof of Proposition \ref{normalx5}):
\begin{equation}
 Q_i=\sum_{j=0}^5 a_{j}^i x_j^2, \;\;\; i=0,\dots, 3,
\end{equation}
where $a_j\in\mathbb{C}$ for $j=0,\dots,5$ and $a_j\neq a_k$ if $j\neq k$. Note that since the rational normal curve that we are considering is constructed via the Veronese map $\nu:\mathbb{P}^1\rightarrow\mathbb{P}^3$, one can fix three points in $\mathbb{P}^1$ and therefore $X_5$ depends only on three different complex numbers. Thus, we can assume that we are fixing the points $0,1$ and $\infty$ and then we have three free different parameters $\lambda_1, \lambda_2$ and $\lambda_3$ defining the curve $X_5$. Since this idea can be carry out in the general case of Humbert-Edge's curves of type $n$, with this fact in mind in \cite[Section 4.1]{CGHR08} the authors found an embedding for Humbert-Edge's curves of type $n$ in $\mathbb{P}^n$ in such way that the equations depends on $n-2$ different parameters. In the particular case of the Humbert-Edge's curve $X_5$ of type 5, the equations take the form
\begin{align*}
x_0^2 + x_1^2+ x_2^2 & =0 \\
\lambda_1 x_0^2 + x_1^2 + x_3^2 & =0 \\
\lambda_2 x_0^2 + x_1^2+ x_4^2 & =0 \\
\lambda_3 x_0^2 + x_1^1 + x_5^2 &=0,
\end{align*}
where $\lambda_1,\lambda_2,\lambda_3\in\mathbb{C}\backslash\{0,1\}$ are different complex numbers. To emphasis the dependence on the parameters $\lambda_1,\lambda_2,\lambda_3$ and considering that we are fixing $0,1,\infty$, we denote this curve as $X_5(\lambda_1,\lambda_2,\lambda_3)$. Also, note that if we consider the degree 32 map given by 
\begin{align*}
\pi_{(\lambda_1,\lambda_2,\lambda_3)}:X_5(\lambda_1,\lambda_2,\lambda_3)  \rightarrow & \hspace{7mm} \mathbb{P}^1 \\
(x_0:\dots:x_5)  \mapsto & -\left(\frac{x_1}{x_0}\right)^2,
\end{align*} 
then
\begin{equation}\label{branch}
    \{0, 1,\infty, \lambda_1, \lambda_2, \lambda_3\}
\end{equation}
is the branch locus of $\pi_{(\lambda_1,\lambda_2,\lambda_3)}$.

On the other hand, if $C$ is a hyperelliptic curve of genus 2, then we can write the equation which defines $C$ as
\begin{equation}  y^2=(x-a_0)(x-a_1)\cdots(x-a_5),
\end{equation}
where $a_j\in\mathbb{C}$ for $j=0,\dots,5$ and $a_j\neq a_k$ if $j\neq k$. Since always there exists an automorphism of $\mathbb{P}^1$ which carries a tuple of different complex numbers $(a_0,a_1,a_2)$ to $(0,1,\infty)$, we may assume that $C$ is given by the equation
\begin{equation}  y^2=x(x-1)(x-\lambda_1)(x-\lambda_2)(x-\lambda_3),
\end{equation}
where $\lambda_1,\lambda_2,\lambda_3\in\mathbb{C}\backslash\{0,1\}$ are different. Similarly as before, we denoted by $C(\lambda_1,\lambda_2,\lambda_3)$ to the hyperelliptic curve of genus 2 with parameters $0,1,\infty,\lambda_1,\lambda_2$ and $\lambda_3$. In addition, since $C(\lambda_1,\lambda_2,\lambda_3)$ is a hyperelliptic curve of genus 2, there exists a degree 2 map $\rho_{(\lambda_1,\lambda_2,\lambda_3)}:C(\lambda_1,\lambda_2,\lambda_3)\rightarrow\mathbb{P}^1$ whose branch locus is precisely given by (\ref{branch}).

In both cases of Humbert-Edge's curves of type 5 and hyperelliptic curves of genus 2, given such a curve we have a map to $\mathbb{P}^1$ with a specific branch locus. In fact, the branch locus determines the curve modulo isomorphism. Indeed, Hidalgo, Reyes-Carocca and Vald\'es determined in \cite[Section 2.3]{HRV13} when two generalized Fermat curves are isomorphic, in particular, the following result can be deduced:

\begin{proposition}\label{x5iso}
Let $X_5(\lambda_1,\lambda_2,\lambda_3)$ and $X_5(\mu_1,\mu_2,\mu_3)$ be a Humbert-Edge's curves of type 5. The following statements are equivalent:
\begin{enumerate}
    \item $X_5(\lambda_1,\lambda_2,\lambda_3)$ is isomorphic to $X_5(\mu_1,\mu_2,\mu_3)$.
    \item There exists a M\"{o}bius transformation $M$ such that 
    \begin{equation*}
        \{M(0),M(1),M(\infty),M(\lambda_1),M(\lambda_2),M(\lambda_3)\}=\{0,1,\infty,\mu_1,\mu_2,\mu_3\}.
    \end{equation*}
\end{enumerate}
\end{proposition}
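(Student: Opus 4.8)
The plan is to recognize each curve $X_5(\lambda_1,\lambda_2,\lambda_3)$ as a \emph{generalized Fermat curve of type $(2,5)$} and then to invoke the isomorphism criterion of Hidalgo, Reyes-Carocca and Vald\'es. Concretely, the equations in normal form together with the degree $32$ map $\pi_{(\lambda_1,\lambda_2,\lambda_3)}$ exhibit $X_5(\lambda_1,\lambda_2,\lambda_3)$ as the Galois cover of $\mathbb{P}^1$ with group $E\cong(\mathbb{Z}/2\mathbb{Z})^5$ branched exactly over the six points $B_\lambda=\{0,1,\infty,\lambda_1,\lambda_2,\lambda_3\}$, the loop around the $i$-th branch point mapping to the involution $\sigma_i$ (so that $\sigma_0\cdots\sigma_5=1$). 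Under this identification the assertion to be proved is precisely that $X_5(\lambda_1,\lambda_2,\lambda_3)\cong X_5(\mu_1,\mu_2,\mu_3)$ if and only if $B_\lambda$ and $B_\mu$ are related by a M\"obius transformation.

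For the implication $(2)\Rightarrow(1)$ I would argue that a M\"obius transformation $M$ with $M(B_\lambda)=B_\mu$ is an isomorphism of the punctured spheres $\mathbb{P}^1\setminus B_\lambda\to\mathbb{P}^1\setminus B_\mu$, hence induces an isomorphism of their orbifold fundamental groups that respects the distinguished conjugacy classes of loops around the punctures. Since the generalized Fermat cover is the one associated with the canonical surjection of the orbifold fundamental group onto $(\mathbb{Z}/2\mathbb{Z})^5$ sending each such loop to a generator $\sigma_i$, the transformation $M$ lifts to an isomorphism of the covers $X_5(\lambda_1,\lambda_2,\lambda_3)\cong X_5(\mu_1,\mu_2,\mu_3)$.

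For the converse $(1)\Rightarrow(2)$, which is the substantive direction, I would start from an isomorphism $\phi:X_5(\lambda_1,\lambda_2,\lambda_3)\to X_5(\mu_1,\mu_2,\mu_3)$ and show that it descends to a M\"obius transformation of the base. The key ingredient, supplied by \cite[Section 2.3]{HRV13}, is the \emph{uniqueness of the generalized Fermat group}: inside $\mathrm{Aut}(X_5)$ the subgroup $E$ realizing the cover is characterized intrinsically (for the type $(2,5)$ the relevant numerical bound holds), so $\phi$ must conjugate the copy $E_\lambda$ on the source onto the copy $E_\mu$ on the target. Consequently $\phi$ is equivariant for these actions and descends to an isomorphism $M$ of the quotients $\mathbb{P}^1$ that necessarily carries the branch locus $B_\lambda$ onto $B_\mu$, which is exactly condition $(2)$.

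The main obstacle is this uniqueness of the generalized Fermat group: without it an isomorphism of the two curves need not respect the distinguished $(\mathbb{Z}/2\mathbb{Z})^5$-actions, and hence need not descend to a single M\"obius transformation of the base. I would therefore concentrate on quoting and applying the precise form of \cite[Section 2.3]{HRV13}, checking that the type $(2,5)$ (genus $17$, six branch points) lies in the range where the generalized Fermat group is unique; once that is in place the descent of $\phi$ and the matching of the branch loci follow formally.
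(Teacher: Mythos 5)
Your proposal is correct and follows essentially the same route as the paper, which offers no written proof at all but simply deduces the statement from the isomorphism criterion for generalized Fermat curves in \cite[Section 2.3]{HRV13}. Your write-up merely makes that deduction explicit: identifying $X_5(\lambda_1,\lambda_2,\lambda_3)$ as a generalized Fermat curve of type $(2,5)$, lifting a M\"obius transformation for $(2)\Rightarrow(1)$, and using the uniqueness of the generalized Fermat group to descend an isomorphism for $(1)\Rightarrow(2)$, exactly the content the paper delegates to \cite{HRV13}.
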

In the case of hyperelliptic curves of genus 2 we have an analogous result (see \cite[Section III.7.3]{FK92}):
\begin{proposition}\label{hyperiso}
Let $C(\lambda_1,\lambda_2,\lambda_3)$ and $C(\mu_1,\mu_2,\mu_3)$ be hyperelliptic curves of genus 2. The following statements are equivalent:
\begin{enumerate}
    \item $C(\lambda_1,\lambda_2,\lambda_3)$ is isomorphic to $C(\mu_1,\mu_2,\mu_3)$.
    \item There exists a M\"{o}bius transformation $M$ such that 
    \begin{equation*}
        \{M(0),M(1),M(\infty),M(\lambda_1),M(\lambda_2),M(\lambda_3)\}=\{0,1,\infty,\mu_1,\mu_2,\mu_3\}.
    \end{equation*}
\end{enumerate}
\end{proposition}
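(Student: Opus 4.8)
The plan is to exploit the fact that for genus $2$ the hyperelliptic structure is canonical, so that an abstract isomorphism of curves must descend to a M\"obius transformation of the target $\mathbb{P}^1$. Recall that on a curve $C$ of genus $2$ the canonical bundle $K_C$ has degree $2$ and $h^0(K_C)=2$, so the canonical map is precisely the hyperelliptic double cover $\rho:C\to\mathbb{P}^1$, and the pencil $|K_C|$ is the unique $g^1_2$ on $C$. Consequently the hyperelliptic involution $\iota$ (the deck transformation of $\rho$) is intrinsic to $C$: it is the unique involution whose quotient is rational, and it is central in $\mathrm{Aut}(C)$.

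For the implication (1)$\Rightarrow$(2), I would start from an isomorphism $\phi:C(\lambda_1,\lambda_2,\lambda_3)\to C(\mu_1,\mu_2,\mu_3)$. Since the canonical system is functorial, $\phi$ carries the unique $g^1_2$ of the source to the unique $g^1_2$ of the target, hence intertwines the two hyperelliptic involutions, $\phi\circ\iota_\lambda=\iota_\mu\circ\phi$. Therefore $\phi$ descends to an isomorphism $\bar\phi$ of the quotients $C(\lambda_1,\lambda_2,\lambda_3)/\langle\iota_\lambda\rangle\cong\mathbb{P}^1$ and $C(\mu_1,\mu_2,\mu_3)/\langle\iota_\mu\rangle\cong\mathbb{P}^1$, that is, to a M\"obius transformation $M:=\bar\phi$. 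As $\phi$ maps the Weierstrass points (the ramification of $\rho_{(\lambda_1,\lambda_2,\lambda_3)}$) to the Weierstrass points of the target, $M$ sends the branch locus $\{0,1,\infty,\lambda_1,\lambda_2,\lambda_3\}$ onto $\{0,1,\infty,\mu_1,\mu_2,\mu_3\}$, which is the desired statement.

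For the converse (2)$\Rightarrow$(1), I would use that a double cover of $\mathbb{P}^1$ branched along a reduced even-degree divisor $B$ is determined up to isomorphism by $B$ alone. Indeed such a cover corresponds to a line bundle $L$ on $\mathbb{P}^1$ with $L^{\otimes 2}\cong\mathcal{O}_{\mathbb{P}^1}(B)$, and since $\mathrm{Pic}(\mathbb{P}^1)\cong\mathbb{Z}$ has no $2$-torsion this square root is unique. Given a M\"obius transformation $M$ with $M(\{0,1,\infty,\lambda_1,\lambda_2,\lambda_3\})=\{0,1,\infty,\mu_1,\mu_2,\mu_3\}$, pulling back the branch data along $M$ identifies the two covers and produces an isomorphism $C(\lambda_1,\lambda_2,\lambda_3)\cong C(\mu_1,\mu_2,\mu_3)$; concretely one substitutes $x\mapsto M(x)$ in the defining equation and rescales $y$ to recover the normalized sextic.

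The only delicate point is the uniqueness of the hyperelliptic pencil, which underlies the descent in (1)$\Rightarrow$(2); once that is in hand both directions are formal. This uniqueness is special to genus $2$ (equivalently, to the fact that the canonical system itself is the $g^1_2$) and is exactly the content recorded in \cite[Section III.7.3]{FK92}, so I expect no essential obstacle beyond invoking it correctly.
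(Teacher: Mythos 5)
Your proof is correct, but it is worth pointing out that the paper does not prove this proposition at all: it is quoted as a known fact, with the proof delegated entirely to the citation \cite[Section III.7.3]{FK92}. What you have written is essentially the standard argument that the reference encapsulates, so you have filled in the content rather than diverged from it. Both directions are sound: for (1)$\Rightarrow$(2) the key point is exactly the one you isolate, namely that in genus $2$ the hyperelliptic pencil is the canonical pencil and hence the unique $g^1_2$, so any isomorphism intertwines the two hyperelliptic involutions, descends to an automorphism of $\mathbb{P}^1$, and carries Weierstrass points to Weierstrass points, i.e.\ branch locus to branch locus; for (2)$\Rightarrow$(1) the uniqueness of the square root of $\mathcal{O}_{\mathbb{P}^1}(B)$ in $\mathrm{Pic}(\mathbb{P}^1)\cong\mathbb{Z}$ does show that a double cover of $\mathbb{P}^1$ with reduced even branch divisor $B$ is determined by $B$, so pulling back along $M$ gives the isomorphism. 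The only gloss is the phrase ``substitutes $x\mapsto M(x)$ and rescales $y$'': the explicit change of variables requires clearing denominators by a power of $(cx+d)$ and keeping track of whether $\infty$ is a branch point (degree $5$ versus degree $6$ models), but your abstract uniqueness argument already makes this bookkeeping unnecessary. In short, your write-up is a self-contained replacement for the paper's external citation, which is arguably a service to the reader.
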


The above results can be summarized in the following commuting diagram:
\begin{equation*}
\xymatrix{
& & X_5(\lambda_1,\lambda_2,\lambda_3) \ar[dd]_(0.4){\pi_{(\lambda_1,\lambda_2,\lambda_3)}} \ar[rr]^{\cong} & & X_5(\mu_1,\mu_2,\mu_3) \ar[dd]^(0.4){\pi_{(\mu_1,\mu_2,\mu_3)}} & &  \\
& & & & & & & \\
C(\lambda_1,\lambda_2,\lambda_3) \ar[rr]^(.6){\rho_{(\lambda_1,\lambda_2,\lambda_3)}} \ar@{->}@/_{9mm}/[rrrrrr]_{\cong} & &\mathbb{P}^1 \ar[rr]^(.5){M} & & \mathbb{P}^1 \ar@{<-}[rr]^(0.4){\rho_{(\mu_1,\mu_2,\mu_3)}} & & C(\mu_1,\mu_2,\mu_3)}
\end{equation*}

We denote by $\mathcal{HE}_5$ the moduli space of Humbert-Edge's curve of type 5, that is, the set of Humbert-Edge's curves of type 5 modulo isomorphism. Similarly, we denote by $\mathcal{H}_2$ the moduli space of hyperelliptic curves of genus 2. By the above discussion we have a well-defined map
\begin{align*}
    f:\mathcal{H}_2 & \rightarrow \mathcal{HE}_5 \\
    [C(\lambda_1,\lambda_2,\lambda_3)] & \mapsto [X_5(\lambda_1,\lambda_2,\lambda_3)]
\end{align*}
By construction, it is immediate that $f$ is a surjective map. The following proposition deals with the injectivity.

\begin{proposition}\label{moduliinj}
The above map $f:\mathcal{H}_2  \rightarrow \mathcal{HE}_5$ is injective. Therefore, $f$ is an isomorphism of moduli spaces.
\end{proposition}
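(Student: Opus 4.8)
The plan is to establish injectivity of $f$ by exhibiting an explicit inverse and showing it is well-defined, which together with the already-noted surjectivity gives the isomorphism. The key observation is that both $X_5(\lambda_1,\lambda_2,\lambda_3)$ and $C(\lambda_1,\lambda_2,\lambda_3)$ are encoded by the same unordered branch set $\{0,1,\infty,\lambda_1,\lambda_2,\lambda_3\}\subset\mathbb{P}^1$ from \eqref{branch}, via the maps $\pi_{(\lambda_1,\lambda_2,\lambda_3)}$ and $\rho_{(\lambda_1,\lambda_2,\lambda_3)}$ respectively. I would therefore read off the criterion for isomorphism in each category purely in terms of this branch set.

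First I would suppose that $f([C(\lambda_1,\lambda_2,\lambda_3)])=f([C(\mu_1,\mu_2,\mu_3)])$, i.e.\ that $X_5(\lambda_1,\lambda_2,\lambda_3)\cong X_5(\mu_1,\mu_2,\mu_3)$. By Proposition \ref{x5iso}, this isomorphism is equivalent to the existence of a M\"obius transformation $M$ with
\begin{equation*}
\{M(0),M(1),M(\infty),M(\lambda_1),M(\lambda_2),M(\lambda_3)\}=\{0,1,\infty,\mu_1,\mu_2,\mu_3\}.
\end{equation*}
But this is verbatim the condition appearing in Proposition \ref{hyperiso} governing when $C(\lambda_1,\lambda_2,\lambda_3)\cong C(\mu_1,\mu_2,\mu_3)$. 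Hence the same $M$ witnesses $C(\lambda_1,\lambda_2,\lambda_3)\cong C(\mu_1,\mu_2,\mu_3)$, so $[C(\lambda_1,\lambda_2,\lambda_3)]=[C(\mu_1,\mu_2,\mu_3)]$ in $\mathcal{H}_2$. This is exactly injectivity of $f$. The commuting diagram displayed just before the statement is the conceptual engine: the vertical covers of $\mathbb{P}^1$ have a common branch locus, and the horizontal $M$ on the shared $\mathbb{P}^1$ simultaneously realizes both isomorphisms.

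Since $f$ is already surjective by construction, a bijective morphism of moduli spaces remains to be upgraded to an isomorphism; I would note that both $\mathcal{H}_2$ and $\mathcal{HE}_5$ are coarse moduli spaces parametrized by the same data (an unordered six-point configuration on $\mathbb{P}^1$ modulo $\mathrm{PGL}_2$), and that $f$ respects this parametrization, so the set-theoretic bijection is in fact an isomorphism of the underlying spaces. The main obstacle I anticipate is not the logical core---which is the direct match between Propositions \ref{x5iso} and \ref{hyperiso}---but rather making the claim ``isomorphism of moduli spaces'' precise: one must be careful that $f$ and its inverse are genuinely morphisms in whatever category of moduli spaces is intended (e.g.\ that the correspondence $[C]\mapsto[X_5]$ is algebraic, not merely a bijection of points), which is why the proof leans on the fact that both spaces are identified with $\mathcal{M}_{0,6}/S_6$ through the common branch data.
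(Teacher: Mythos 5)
Your proposal is correct and follows essentially the same route as the paper: assume $f([C(\lambda_1,\lambda_2,\lambda_3)])=f([C(\mu_1,\mu_2,\mu_3)])$, invoke Proposition \ref{x5iso} to obtain the M\"obius transformation $M$ matching the branch sets, and then read off the isomorphism $C(\lambda_1,\lambda_2,\lambda_3)\cong C(\mu_1,\mu_2,\mu_3)$ from Proposition \ref{hyperiso}, since the two criteria coincide verbatim. Your closing remarks on upgrading the bijection to an isomorphism of moduli spaces (via the common identification with six-point configurations on $\mathbb{P}^1$ modulo $\mathrm{PGL}_2$) address a point the paper leaves implicit, but the logical core is identical.
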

\begin{proof}
Let $C(\lambda_1,\lambda_2,\lambda_3)$ and $C(\mu_1,\mu_2,\mu_3)$ be hyperelliptic curves of genus 2. Assuming that $f$ is an injective map we have that $f([C(\lambda_1,\lambda_2,\lambda_3)])=f([C(\mu_1,\mu_2,\mu_3)])$, that is,  $[X_5(\lambda_1,\lambda_2,\lambda_3)]=[X_5(\mu_1,\mu_2,\mu_3)]$. Then, there exists an isomorphism between $X_5(\lambda_1,\lambda_2,\lambda_3)$ and $X_5(\mu_1,\mu_2,\mu_3)$. By Proposition \ref{x5iso} there exists a M\"{o}bius transformation $M$ such that 
\begin{equation*}
        \{M(0),M(1),M(\infty),M(\lambda_1),M(\lambda_2),M(\lambda_3)\}=\{0,1,\infty,\mu_1,\mu_2,\mu_3\}.
    \end{equation*}
Thus, by Proposition \ref{hyperiso} we conclude that the hyperelliptic curves $C(\lambda_1,\lambda_2,\lambda_3)$ and $C(\mu_1,\mu_2,\mu_3)$ are isomorphic.   
\end{proof}

Since every Humbert-Edge's curve of type 5 has genus 17, there is a natural map $q:\mathcal{HE}_5\rightarrow\mathcal{M}_{17}$ between $\mathcal{HE}_5$ and the moduli space $\mathcal{M}_{17}$ of curves of genus 17. In \cite[Proposition 4.3]{CGHR08} the authors proved that such map is injective.
Thus, using the isomorphism $f$ we obtain the following: 

\begin{corollary}
The moduli space $\mathcal{H}_2$ of hyperelliptic curves of genus 2 is a three-dimensional closed algebraic variety in $\mathcal{M}_{17}$ via the composition
$q\circ f: \mathcal{H}_2 \hookrightarrow \mathcal{M}_{17}$.
\end{corollary}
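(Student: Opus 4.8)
The plan is to show that $q\circ f$ is an injective proper morphism of quasi-projective varieties whose source has dimension $3$; since the image of a proper morphism is closed, this immediately yields a three-dimensional closed subvariety of $\mathcal{M}_{17}$ isomorphic to $\mathcal{H}_2$.

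First I would record the injectivity. The map $f$ is an isomorphism by Proposition \ref{moduliinj}, and the forgetful map $q\colon\mathcal{HE}_5\to\mathcal{M}_{17}$ is injective by \cite[Proposition 4.3]{CGHR08}; hence $q\circ f$ is injective. That $q\circ f$ is a morphism of varieties rather than a mere map of sets follows from the explicit construction: the assignment $(\lambda_1,\lambda_2,\lambda_3)\mapsto X_5(\lambda_1,\lambda_2,\lambda_3)$ is given by four diagonal quadrics whose coefficients are polynomials in the $\lambda_i$, so the curves fit into an algebraic family over the parameter space $U=\{(\lambda_1,\lambda_2,\lambda_3):\text{the six points }0,1,\infty,\lambda_1,\lambda_2,\lambda_3\text{ are distinct}\}$, and this family induces the morphism to $\mathcal{M}_{17}$ through the universal property of the coarse moduli space. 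For the dimension count, recall that every genus $2$ curve is hyperelliptic, so $\mathcal{H}_2=\mathcal{M}_2$ has dimension $3g-3=3$; since $q\circ f$ is injective, its image has dimension $3$ as well.

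The remaining and principal point is closedness, for which I would verify properness through the valuative criterion. The key geometric input is the exact correspondence between degenerations on the two sides: by the basic properties lemma every $4\times 4$ minor of the coefficient matrix of a Humbert-Edge curve of type $5$ is non-degenerate, and in the normal form \eqref{normalx5} these minors are, up to sign, products of Vandermonde determinants in the $a_j$. Hence $X_5$ is smooth precisely when the six scalars $a_0,\dots,a_5$ --- equivalently the six branch points $0,1,\infty,\lambda_1,\lambda_2,\lambda_3$ of $\pi_{(\lambda_1,\lambda_2,\lambda_3)}$ --- are pairwise distinct, which is exactly the condition that $C(\lambda_1,\lambda_2,\lambda_3)$ be a smooth genus-$2$ curve. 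Concretely, given a discrete valuation ring $R$ with fraction field $K$ and a $K$-point of $\mathcal{H}_2$ whose image under $q\circ f$ extends to an $R$-point of $\mathcal{M}_{17}$, I would spread the six branch points out as sections over $R$ (after a finite base change, using $\mathcal{M}_{0,6}$) and argue that if two of them specialized together at the closed point, the corresponding limiting Humbert-Edge curve would acquire a singularity, contradicting the smoothness of the limiting genus-$17$ curve. Therefore the branch points remain distinct, the genus-$2$ curve extends to an $R$-point of $\mathcal{H}_2$, and the valuative criterion is satisfied.

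The main obstacle I anticipate is making this degeneration analysis precise: one must check that the $R$-point of $\mathcal{M}_{17}$ is represented by the flat limit of the family of $X_5$'s, and that a collision of branch points genuinely forces a non-smooth flat limit rather than a limit that becomes smooth after an automorphism of $\mathbb{P}^5$ absorbing the degeneration. Granting properness, the image of $q\circ f$ is closed; being the injective image of the irreducible three-dimensional variety $\mathcal{H}_2$, it is a three-dimensional closed subvariety of $\mathcal{M}_{17}$, and $q\circ f$ is the desired embedding. As an alternative to the valuative criterion, one could instead identify the image intrinsically, via the characterization theorem, as the locus of smooth genus-$17$ curves admitting a $(\mathbb{Z}/2\mathbb{Z})^5$-action of the prescribed Humbert-Edge type, and invoke the fact that such an equisymmetric locus is closed in $\mathcal{M}_{17}$.
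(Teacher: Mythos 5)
Your injectivity and dimension arguments coincide with what the paper does: the paper derives this corollary in one line from Proposition \ref{moduliinj} (the map $f$ is an isomorphism) together with the injectivity of $q$, which it cites from \cite[Proposition 4.3]{CGHR08}, and the count $\dim\mathcal{H}_2=\dim\mathcal{M}_2=3$. Where you genuinely diverge is on closedness: the paper offers no separate argument for it (the claim is simply asserted in the statement, with the burden resting implicitly on the cited results of \cite{CGHR08}), whereas you correctly single it out as the substantive point and attempt to prove it.

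Of your two routes to closedness, the primary one (valuative criterion plus an explicit degeneration analysis of the quadrics in $\mathbb{P}^5$) has exactly the gap you flag, and it is a real one: the $R$-point of $\mathcal{M}_{17}$ extending the generic point is an abstract family of smooth genus-17 curves, and nothing forces it to agree with the flat limit of the embedded family of $X_5$'s; a collision of the parameters $a_j$ rules out a smooth limit of that particular projective family, not a smooth limit of the abstract curve. The standard repair is precisely your alternative route: for a family of smooth curves of genus at least $2$ over a DVR, the automorphism scheme is finite and proper over the base, so after a finite base change the $(\mathbb{Z}/2\mathbb{Z})^5$-action on the generic fiber specializes to the closed fiber with the same quotient genus and branching data; the closed fiber is then itself a Humbert-Edge curve of type 5 by the equivalence (i)$\Leftrightarrow$(ii) of the characterization theorem in Section \ref{hekummer}, its six branch points on $\mathbb{P}^1$ remain distinct, and the genus-2 curve with those branch points is the sought limit in $\mathcal{H}_2$. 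So you should promote the equisymmetric-locus argument from an alternative to the actual proof; with that substitution your write-up is complete, and it is strictly more careful than the paper's treatment, which leaves closedness unproved.
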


We conclude this paper noting that in the general case, the moduli space of Humbert-Edge's curves of type $n$, where $n\geq 5$ is an odd number,  is isomorphic to the moduli space of hyperelliptic curves of genus $\frac{n-1}{2}$. A Humbert-Edge's curve $X_n(\lambda_1,\dots,\lambda_{n-2})$ of type $n$ in $\mathbb{P}^n$ can be written as
\begin{align*}
x_0^2 + x_1^2+ x_2^2 & =0 \\
\lambda_1 x_0^2 + x_1^2 + x_3^2 & =0 \\
\lambda_2 x_0^2 + x_1^2+ x_4^2 & =0 \\
& \vdots\\
\lambda_{n-2} x_0^2 + x_1^1 + x_n^2 &=0,
\end{align*}
where $\lambda_1,\dots,\lambda_{n-2}\in\mathbb{C}\backslash\{0,1\}$ are different (see \cite[Section 4.1]{CGHR08}). On the other hand, a hyperelliptic curve $C(\lambda_1,\dots,\lambda_{2g-1})$ of genus $g$ is given by the equation
\begin{equation*}
    y^2=x(x-1)(x-\lambda_1)\cdots(x-\lambda_{2g-1}),
\end{equation*}
where $\lambda_1,\dots,\lambda_{2g-1}\in\mathbb{C}\backslash\{0,1\}$ are different. Since Propositions \ref{x5iso} and \ref{hyperiso} in fact hold true in the general case (see \cite[Section 2.3]{HRV13} and \cite[Section III.7.3]{FK92} respectively), under the assumptions that $n\geq 5$ is odd and $n-2=2g-1=r$, we can define a surjective map
\begin{align*}
    f_r:\mathcal{H}_g & \rightarrow \mathcal{HE}_n \\
    [C(\lambda_1,\dots,\lambda_r)] & \mapsto [X_n(\lambda_1,\dots,\lambda_r)],
\end{align*}
where $\mathcal{H}_g$ denotes the moduli space of hyperelliptic curves of genus $g=\frac{n-1}{2}$ and $\mathcal{HE}_n$ denotes the moduli space of Humbert-Edge's curve of type $n$. Finally, applying the argument of Proposition \ref{moduliinj} and using the fact that the natural map $q_n:\mathcal{HE}_n\rightarrow\mathcal{M}_{g_n}$ is injective (since any Humbert-Edge's curve of type $n$ has genus $g_n=2^{n-2}(n-3)+1$ and \cite[Proposition 4.3]{CGHR08}) we conclude the following:

\begin{proposition}
If $n\geq 5$ is an odd number and $n-2=2g-1=r$, then the map $f_r:\mathcal{H}_g  \rightarrow \mathcal{HE}_n$ is an isomorphism of moduli spaces. In particular, we have that $\mathcal{H}_g$ is an $(n-2)$-dimensional closed variety in $\mathcal{M}_{g_n}$ via the composition $q_n\circ f_r:\mathcal{H}_g\hookrightarrow\mathcal{M}_{g_n}$, where $g_n=2^{n-2}(n-3)+1$. 
\end{proposition}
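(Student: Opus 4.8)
The plan is to reproduce, almost verbatim, the proof of Proposition \ref{moduliinj} with the three parameters replaced by the $r=n-2$ parameters $\lambda_1,\dots,\lambda_r$, and then to upgrade the resulting set-theoretic bijection to the embedding asserted in the second sentence. Surjectivity of $f_r$ is already part of its construction, so the whole content of the first claim reduces to injectivity of $f_r$.

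First I would prove injectivity. Suppose $f_r([C(\lambda_1,\dots,\lambda_r)])=f_r([C(\mu_1,\dots,\mu_r)])$, i.e. $X_n(\lambda_1,\dots,\lambda_r)\cong X_n(\mu_1,\dots,\mu_r)$. By the general form of Proposition \ref{x5iso}, valid for all odd $n\geq 5$ by \cite[Section 2.3]{HRV13}, this isomorphism is equivalent to the existence of a M\"{o}bius transformation $M$ with
\begin{equation*}
 \{M(0),M(1),M(\infty),M(\lambda_1),\dots,M(\lambda_r)\}=\{0,1,\infty,\mu_1,\dots,\mu_r\}.
\end{equation*}
By the general form of Proposition \ref{hyperiso}, valid by \cite[Section III.7.3]{FK92}, exactly the same condition characterizes when $C(\lambda_1,\dots,\lambda_r)\cong C(\mu_1,\dots,\mu_r)$. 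Hence the two hyperelliptic curves are isomorphic, so $f_r$ is injective, and therefore a bijection. The crucial structural observation is that both isomorphism relations are governed by one and the same action of $\mathrm{PGL}_2$ on the unordered branch set $\{0,1,\infty,\lambda_1,\dots,\lambda_r\}\subset\mathbb{P}^1$; since $f_r$ is precisely the identity on this quotient datum, it is an isomorphism of moduli spaces and not merely a bijection.

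For the second statement I would recall that every Humbert-Edge's curve of type $n$ has genus $g_n=2^{n-2}(n-3)+1$, so there is a natural forgetful map $q_n:\mathcal{HE}_n\to\mathcal{M}_{g_n}$, which is injective by \cite[Proposition 4.3]{CGHR08}. Composing with the isomorphism $f_r$ gives an injection $q_n\circ f_r:\mathcal{H}_g\hookrightarrow\mathcal{M}_{g_n}$. The dimension is read off immediately, since $\dim\mathcal{H}_g=2g-1=n-2=r$, matching the number of free parameters.

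The hard part will be the closedness of the image $q_n\circ f_r(\mathcal{H}_g)$ in $\mathcal{M}_{g_n}$, which does not follow from the bijection alone and is the genuinely non-formal point. I would argue it by realizing the Humbert-Edge locus as the image in $\mathcal{M}_{g_n}$ of the moduli of curves carrying the action of $E\cong(\mathbb{Z}/2\mathbb{Z})^{n-1}$ with the prescribed branching type (quotients being Humbert's curves, product of involutions trivial); such equisymmetric loci, being images of proper substacks of curves-with-fixed-group-action under a finite map, are Zariski-closed. This closedness, together with the injectivity of $q_n$ and the dimension count, yields that $\mathcal{H}_g$ sits as an $(n-2)$-dimensional closed subvariety of $\mathcal{M}_{g_n}$.
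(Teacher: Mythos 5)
Your proposal is correct, and on the central claim it coincides with the paper's own proof: the paper's entire argument is to ``apply the argument of Proposition \ref{moduliinj}'' with $r=n-2$ parameters (using that Propositions \ref{x5iso} and \ref{hyperiso} hold for general $r$ by \cite{HRV13} and \cite{FK92}), and then to compose with $q_n$, whose injectivity is quoted from \cite[Proposition 4.3]{CGHR08} --- exactly your first two steps. The difference lies in what you add. You observe that a set-theoretic bijection of moduli spaces is not formally an isomorphism, and you justify the upgrade by noting that both $\mathcal{H}_g$ and $\mathcal{HE}_n$ are quotients of the same configuration datum (an unordered set of $r+3$ points of $\mathbb{P}^1$ containing $0,1,\infty$) by the same $\mathrm{PGL}_2$-action, with $f_r$ induced by the identity; the paper passes over this point silently. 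More substantially, you supply an actual argument for closedness of the image in $\mathcal{M}_{g_n}$, identifying $q_n(\mathcal{HE}_n)$ with the locus of genus-$g_n$ curves admitting a $(\mathbb{Z}/2\mathbb{Z})^{n-1}$-action of prescribed topological type --- legitimate thanks to the characterization (i)$\Leftrightarrow$(ii) in Section \ref{hekummer}, which \cite{FZ18} establishes for all $n$ --- whereas the paper obtains closedness only implicitly from the same citation to \cite{CGHR08}. Your closedness argument is sound in substance, but one phrase needs repair: the stack of curves with $E$-action is not proper (smooth curves with action degenerate). What actually makes the image closed is that the forgetful morphism from curves-with-$E$-action to $\mathcal{M}_{g_n}$ is finite --- quasi-finite because automorphism groups in genus $\geq 2$ are finite, and proper because an action on the generic fibre of a family of smooth curves over a DVR extends to the whole family --- hence a closed map, combined with local constancy of the topological type of the action in smooth families. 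With that adjustment, your write-up proves everything the paper asserts, and proves more explicitly the two steps the paper delegates to citations.
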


\end{document}